\def\Hom{\mathop{\rm Hom}\nolimits}
\def\Cb{{\mathbb C}}
\def\Hc{{\mathcal{H}}}
\def\Fc{{\cal F}}
\def\Uc{{\cal U}}
\def\a{\alpha}
\def\d{\delta}
\def\D{\Delta}
\def\s{\sigma}
\def\ve{\varepsilon}
\def\vp{\varphi}
\def\ot{\otimes}
\def\acl{\blacktriangleright\hspace{-4pt}\vartriangleleft }
\def\0D{\Delta^{(0)}}
\def\1D{\Delta^{(1)}}
\newtheorem{theorem}{Theorem}[section]
\newtheorem{remark}[theorem]{Remark}
\newtheorem{proposition}[theorem]{Proposition}
\newtheorem{lemma}[theorem]{Lemma}
\newtheorem{example}[theorem]{Example}
\newtheorem{definition}[theorem]{Definition}
\def\build#1_#2^#3{\mathrel{
\mathop{\kern 0pt#1}\limits_{#2}^{#3}}}
\newcommand{\ps}[1]{~\hspace{-4pt}^{^{(#1)}}}
\newcommand{\ns}[1]{~\hspace{-4pt}_{_{{<#1>}}}}
\newcommand{\lrbicross}{{\blacktriangleright\!\!\!\triangleleft}}
\def\odots{\ot\cdots\ot}
\numberwithin{equation}{section}
 \newcommand{\ie}{{\it i.e.\/}\ }
\def\a{\alpha}
\def\d{\delta}
\def\s{\sigma}
\def\ve{\varepsilon}
\def\vp{\varphi}
\def\D{\Delta}
\def\ot{\otimes}
\def\part{\partial}
\def\text{\hbox}
\def\ot{\otimes}
\def\Hom{\mathop{\rm Hom}\nolimits}
\def\build#1_#2^#3{\mathrel{
\mathop{\kern 0pt#1}\limits_{#2}^{#3}}}
\numberwithin{equation}{section}
\newcommand{\comment}[1]{\relax}
\begin{document}
\title{\bf New coefficients for Hopf cyclic cohomology }

\author{Mohammad Hassanzadeh \\ University of Windsor\\ mhassan@uwindsor.ca     }

\date{}
\maketitle

\begin{abstract}
  In this note the categories  of coefficients for Hopf cyclic cohomology of comodule algebras and comodule coalgebras are extended. We show that  these new categories have two proper different subcategories where the smallest one is the known  category of stable anti Yetter-Drinfeld modules.  We prove that components of Hopf cyclic cohomology such as cup products work well  with these new coefficients.
\end{abstract}

Subject classification [2000]: 19D55,  16T05, 11M55\\
 Keywords: Cyclic cohomology, Hopf algebras, Noncommutative geometry.

\section*{Introduction}
 Coefficients for Hopf cyclic cohomology was first introduced in  \cite{HaKhRaSo1} under the name of stable anti Yetter-Drinfeld (SAYD) modules for all four quantum symmetries i.e., (co)module (co)algebras.
 These coefficients are  generalized  in different ways; first for bialgebra cyclic homology in \cite{atabey}, for Hopf algebras in \cite{staic},  in terms of contramodules in \cite{br}, for Hopf algebroids in \cite{bs2}, with categorical approach in \cite{bs1}    and finally   for module algebras and module coalgebras   in \cite{hkr}.  In  the last reference a  suitable class of coefficients for Hopf cyclic cohomology is  introduced  and  is  shown  to be much  larger than the category of SAYD modules.
More precisely,  it was  shown that there are at least three noticeable categories of such coefficients by which the Hopf cyclic cohomology  of module algebras and module coalgebras make sense. This new class of coefficients  are indispensable    as there are  Hopf algebras, such as Connes-Moscovici Hopf algebra which lack  a large  class of SAYD modules.
In fact the new  coefficients,   on the contrary to the old ones, depend on  both  Hopf algebra  and   (co)algebra  in question.   By introducing several examples it is also shown that these extensions of coefficients   are proper.

   \medskip

In this paper we   complete  \cite{hkr} by introducing  the new categories of coefficients for Hopf cyclic cohomology with respect to other two symmetries of comodule algebras and comodule coalgebras which were remained open in \cite{hkr}. One notes that Hopf cyclic cohomology of module coalgebras  generalizes the  Connes-Moscovici  Hopf cyclic cohomology  defined in \cite{ConMos:HopfCyc}. The Hopf cyclic cohomology  of module algebras generalizes the cyclic cohomology of algebras and twisted cyclic cohomology. In the case of comodule algebras one obtains the suitable coefficients for dual of Connes-Moscovici Hopf cyclic cohomology defined in \cite{bm1}. At the end  we show that the components of Hopf cyclic cohomology such as  cup products  \cite{atabey2}, \cite{bm2} work well with these new coefficients.\\

\medskip

\section*{Acknowledgement } The author would like to thank mathematics and statistics department of university of New Brunswick where major part of the work was done. Also the author appreciates  mathematics and statistics department of university of Windsor and  the Institut des Hautes
\'Etudes Scientifiques, IHES, where some portions of the work have been completed.\\


\textbf{Notations}: All Hopf algebras in this paper have bijective antipodes. We denote a Hopf algebra by $\Hc$ and  the counit of a Hopf algebra by $\ve$. We use the Sweedler summation notation $\Delta(h)= h\ps{1}\ot h\ps{2}$ for the coproduct of a Hopf algebra. Furthermore  $\blacktriangledown(h)= h\ns{-1}\ot h\ns{0}$ and $\blacktriangledown(h)= h\ns{0}\ot h\ns{1}$  are used for the left and right coactions of a coalgebra, respectively.


\tableofcontents

\section{Generalized Hopf cyclic cohomology with coefficients}
In this section we introduce two categories of suitable coefficients for Hopf cyclic cohomology for  comodule algebras and comodule coalgebras.   Let us recall from \cite{HaKhRaSo1} that a right-left SAYD module $M$ over a Hopf algebra $\mathcal{H}$ is a right module and a left comodule over $\mathcal{H}$ satisfying the following conditions.
\begin{align}
&\blacktriangledown(mh)= S(h^{(3)}) m\ns{-1}h^{(1)}\ot m\ns{0}h^{(2)}, \quad \text{(AYD condition)}\\
&m\ns{0}m\ns{-1}=m, \quad \text{(Stability condition)}.
\end{align}

\subsection{The $^A\mathcal{H}$-SAYD and HCC modules for comodule algebras}

In this subsection we introduce a generalization of  Hopf cyclic cohomology with coefficients respecting to the symmetry of   a comodule algebra.
Let  $M$ be a right-left SAYD module over $\mathcal{H}$ and $A$ be a left $\mathcal{H}$-comodule algebra. We let $\mathcal{H}$ coacts on $A^{\ot (n+1)}$ diagonally from left, \ie
\begin{equation*}
a_0\ot \cdots \ot a_n\longmapsto a_0\ns{-1}\cdots a_{n}\ns{-1} \ot a_0\ns{0}\ot \cdots \ot a_n\ns{0}.
\end{equation*}
Let $^{\Hc}C^n(A,M):=^{\Hc}\Hom(A^{\ot (n+1)}, M)$ denotes the set of all left $\mathcal{H}$-colinear morphisms. The following maps define a cocyclic module on $^{\Hc}C^*(A,M)$.

\begin{align}\label{comodule algebra}\nonumber
  &(\delta_i f)(a_0\ot \cdots \ot a_n)= f(a_0\ot \cdots \ot a_i a_{i+1}\ot \cdots \ot a_n), \quad 0\leq i< n,\\ \nonumber
  &(\delta_n f)(a_0\ot \cdots \ot a_n)=f(a_n\ns{0} a_0\ot a_1\cdots \ot a_{n-1})a_n\ns{-1},\\
  &(\sigma_i f)(a_0\ot \cdots \ot a_n)= f(a_0\ot \cdots \ot a_i \ot 1\ot \cdots \ot a_n), \quad 0\leq i< n,\\ \nonumber
  &(\tau_n f)(a_0\ot \cdots \ot a_n)=f(a_n\ns{0} \ot a_0\cdots \ot a_{n-1})a_n\ns{-1}.\nonumber
\end{align}

The cyclic cohomology of this cocyclic module is denoted by $^{\Hc}HC^n(A,M)$.
\begin{definition}
Let  $A$ be a left $\mathcal{H}$-comodule algebra. A  right-left module comodule $M$ over $\mathcal{H}$ is called an $^A\mathcal{H}$-SAYD module if for all   $a \in A$ and $\varphi \in \Hom^{\Hc}(A^{\ot (n+1)}, M)$  the following $^A\mathcal{H}$-AYD  and stability conditions are satisfied.
\begin{enumerate}
\item[i)]
\begin{align*}
&\left(\varphi(a\ns{0}\ot \widetilde{a})a\ns{-1}\right)\ns{-1}\ot \left(\varphi(a\ns{0}\ot \widetilde{a})a\ns{-1}\right)\ns{0}=\\
& S(a\ns{-1}\ps{3})\varphi(a\ns{0}\ot \widetilde{a})\ns{-1}a\ns{-1}\ps{1}\ot\varphi(a\ns{0}\ot \widetilde{a})\ns{0}a\ns{-1}\ps{2},
\end{align*}
where $\widetilde{a}=a_1\ot \cdots \ot a_n$.
\item [ii)] $\varphi(\widetilde{a}\ns{0})\widetilde{a}\ns{-1}= \varphi(\widetilde{a})$.
\end{enumerate}
\end{definition}
\begin{remark}\rm{
Obviously  for any $\mathcal{H}$-comodule algebra $A$,   any SAYD module over $\mathcal{H}$ is a $^A\mathcal{H}$-SAYD module.}
\end{remark}
For any character $\delta: \Hc\longrightarrow k$ where $k$ is the ground field of $\Hc$,  the twisted antipode $S_{\delta}: \Hc\longrightarrow \Hc$ is  defined by $S_{\delta}(h)= S(h\ps{2})\delta(h\ps{1})$.
The following lemma generalizes the notion of modular pair in involution \cite{ConMos:HopfCyc}.
\begin{lemma}
   Let $A$ be a left $\mathcal{H}$-comodule algebra,
    $\delta$ be a character and $\sigma$ be a group like element for $\Hc$. If $(\sigma, \delta)$ is a modular pair, \ie $\delta(\sigma)=1$,  and in $^A\Hc$-in involution, \ie
  \begin{equation}
\sigma^{-1} S_{\delta}^{2}(a\ns{-1}) \sigma  \ot a\ns{0}  =  a\ns{-1}  \ot a\ns{0}    ,
  \end{equation}
 for all $ h\in \mathcal{H}, a\in A,$ then $^\sigma\mathbb{C}_{\delta}$ is a $^A\mathcal{H}$-SAYD module.
\end{lemma}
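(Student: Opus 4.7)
The plan is to verify the two axioms of an $^A\Hc$-SAYD module directly for $M={}^\sigma\Cb_\delta$, whose structure on the one-dimensional space $\Cb$ is $1\cdot h = \delta(h)$ and $\blacktriangledown(1)=\sigma\ot 1$. I will first unpack the $\Hc$-colinearity of $\varphi\in \Hom^{\Hc}(A^{\ot(n+1)},M)$: with respect to the diagonal coaction on $A^{\ot(n+1)}$, it becomes
\[
\sigma\ot \varphi(\widetilde{a}) \;=\; \widetilde{a}\ns{-1}\ot \varphi(\widetilde{a}\ns{0}) \qquad\text{in }\Hc\ot\Cb,
\]
where $\widetilde{a}\ns{-1}=a_0\ns{-1}\cdots a_n\ns{-1}$ and $\widetilde{a}\ns{0}=a_0\ns{0}\ot\cdots\ot a_n\ns{0}$.

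The stability condition (ii) then follows at once by applying $\delta\ot\mathrm{id}$ to this colinearity identity and invoking $\delta(\sigma)=1$: the right-hand side reads $\delta(\widetilde{a}\ns{-1})\varphi(\widetilde{a}\ns{0})=\varphi(\widetilde{a}\ns{0})\cdot\widetilde{a}\ns{-1}$, which is exactly the required equality.

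For the AYD condition (i), I will substitute the coaction and action of $M$ on both sides, so that each becomes an element of $\Hc\ot\Cb$. The left collapses to $\sigma\,\delta(a\ns{-1})\ot\varphi(a\ns{0}\ot \widetilde{b})$, while the right, after using $\varphi(\cdot)\ns{-1}\ot\varphi(\cdot)\ns{0}=\sigma\ot\varphi(\cdot)$ and the right $\Hc$-action by $\delta$, simplifies to $S_\delta(a\ns{-1}\ps{2})\,\sigma\,a\ns{-1}\ps{1}\ot\varphi(a\ns{0}\ot\widetilde{b})$, where $S_\delta(h):=\delta(h\ps{1})S(h\ps{2})$ is the twisted antipode. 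The task thus reduces to establishing
\[
S_\delta(a\ns{-1}\ps{2})\,\sigma\,a\ns{-1}\ps{1}\ot a\ns{0}\;=\;\sigma\,\delta(a\ns{-1})\ot a\ns{0} \qquad\text{in }\Hc\ot A.
\]

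This identity is the heart of the argument, and I will prove it by combining the $^A\Hc$-involution with the classical twisted-antipode relation $S_\delta(h\ps{1})h\ps{2}=\delta(h)\cdot 1_\Hc$. Applying $\mathrm{id}\ot\blacktriangledown$ to the $^A\Hc$-MPI and reindexing via the coassociativity identity $a\ns{-1}\ps{1}\ot a\ns{-1}\ps{2}\ot a\ns{0}=a\ns{-1}\ot a\ns{0}\ns{-1}\ot a\ns{0}\ns{0}$ gives the three-fold relation
\[
S_\delta^2(a\ns{-1}\ps{1})\,\sigma\ot a\ns{-1}\ps{2}\ot a\ns{0}\;=\;\sigma\,a\ns{-1}\ps{1}\ot a\ns{-1}\ps{2}\ot a\ns{0}.
\]
Applying the linear map $x\ot y\ot z\mapsto S_\delta(y)x\ot z$ and using that $S_\delta$ is anti-multiplicative rewrites the resulting right-hand side as $S_\delta\bigl(S_\delta(a\ns{-1}\ps{1})\,a\ns{-1}\ps{2}\bigr)\sigma\ot a\ns{0}$; the twisted-antipode relation collapses the inner bracket to the scalar $\delta(a\ns{-1})$, and $S_\delta(1)=1$ then yields $\sigma\,\delta(a\ns{-1})\ot a\ns{0}$, matching the left-hand side. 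The main conceptual point, where care is required, is that the classical MPI identity $S_\delta^2=\sigma(\cdot)\sigma^{-1}$ on all of $\Hc$ is here replaced by its restriction to elements $a\ns{-1}$ coming from the $A$-coaction; the three-fold reformulation above is precisely designed so that this restricted version still carries the standard ``MPI implies SAYD'' calculation for $^\sigma\Cb_\delta$.
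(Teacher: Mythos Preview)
Your proof is correct and follows essentially the same approach as the paper: both verify the $^A\Hc$-AYD condition by a direct computation with the twisted antipode $S_\delta$ and the $^A\Hc$-involution hypothesis, and both obtain stability from $\delta(\sigma)=1$ together with $\Hc$-colinearity of $\varphi$. The only organizational difference is that the paper runs a single chain of equalities (keeping $\varphi$ present and passing through $S_\delta^{-1}$), whereas you first isolate the pure $\Hc\ot A$ identity $S_\delta(a\ns{-1}\ps{2})\,\sigma\,a\ns{-1}\ps{1}\ot a\ns{0}=\sigma\,\delta(a\ns{-1})\ot a\ns{0}$ and prove it by extending the involution via coassociativity and contracting with $S_\delta$; this is slightly cleaner in that it never invokes $S_\delta^{-1}$, but it is the same argument at heart.
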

\begin{proof}
For any $\varphi \in  \Hom^{\Hc}(A^{\ot (n+1)}, \mathbb{C})$   the following computation proves the  $^A\mathcal{H}$-AYD condition.
\begin{align*}
  &\left(\delta(a\ns{-1})\varphi(a\ns{0}\ot \widetilde{a})\right)\ns{-1}\ot \left(\delta(a\ns{-1})\varphi(a\ns{0}\ot \widetilde{b})\right)\ns{0}\\
  &= \delta(a\ns{-1})\varphi(a\ns{0}\ot \widetilde{a})\sigma\ot 1\\
  &=\delta(a\ns{-1}\ps{3})\varphi(a\ns{0}\ot \widetilde{a})\sigma S^{-1}(a\ns{-1}\ps{2})a\ns{-1}\ps{1}\ot 1\\
  &=\varphi(a\ns{0}\ot \widetilde{a}) \sigma S^{-1}_{\delta}(a\ns{-1}\ps{2} )\sigma^{-1}\sigma a\ns{-1}\ps{1}\ot 1\\
  &=\varphi(a\ns{0}\ot \widetilde{a}) S^{-1}_{\delta}(\sigma a\ns{-1}\ps{2} \sigma^{-1})\sigma a\ns{-1}\ps{1}\ot 1\\
  &=\varphi(a\ns{0}\ns{0}\ot \widetilde{a}) S_{\delta}^{-1}( S_{\delta}^{2}(a\ns{0}\ns{-1})\sigma a\ns{-1}\ot 1\\
  &=\varphi(a\ns{0}\ot \widetilde{a}) S_{\delta}^{-1}( S_{\delta}^{2}(a\ns{-1}\ps{2}))\sigma a\ns{-1}\ps{1}\ot 1\\
  &=\varphi(a\ns{0}\ot \widetilde{a})  S(a\ns{-1}\ps{3})\delta(a\ns{-1}\ps{2})\sigma a\ns{-1}\ps{1}\ot 1\\
  &=  S(a\ns{-1}\ps{3}) \varphi(a\ns{0}\ot \widetilde{a})\ns{-1}  a\ns{-1}\ps{1}\ot \varphi(a\ns{0}\ot \widetilde{a})\ns{0} a\ns{-1}\ps{2} .\\
  \end{align*}
  We use anti-algebra map property of $S^{-1}_{\delta}$ in the fourth equality,
   the coassociativity and $^A\Hc$-in involution condition for the element $a\ns{0}\in A$ in fifth equality. One has the $^A\mathcal{H}$-stability condition by the modular pair condition.
\end{proof}
\begin{proposition}
Let $(\d,\s)$ be a modular pair for  $\Hc$ and $ A$ be a left $\Hc$-comodule algebra. We define the following subspace of $A$,
\begin{align}\nonumber
B=\{ a\in A\mid \sigma^{-1} S_{\delta}^{2}(a\ns{-1}) \sigma  \ot a\ns{0}  =  a\ns{-1}  \ot a\ns{0}   \}.
\end{align}
Then $B$ is a $\Hc$-comodule subalgebra of $A$. Furthermore  $^\sigma\mathbb{C}_{\delta}$ is a  $^B\Hc$-SAYD module.
\end{proposition}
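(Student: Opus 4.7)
The plan is to establish, in sequence: (i) $B$ is multiplicatively closed and contains $1_A$, (ii) $B$ is stable under the left coaction $\blacktriangledown$, and (iii) the preceding lemma produces the SAYD structure on $^\sigma\mathbb{C}_\delta$.

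For (i), observe that since $\delta$ is a character the twisted antipode $S_\delta$ is anti-multiplicative, so $S_\delta^2$ is an algebra endomorphism of $\Hc$; combined with the grouplike property of $\sigma$, the conjugation $\Psi(h) := \sigma^{-1} S_\delta^2(h) \sigma$ is an algebra endomorphism of $\Hc$ as well. The defining condition of $B$ reads $(\Psi\otimes \id_A)\circ \blacktriangledown(a) = \blacktriangledown(a)$, so $B$ is the equalizer of the two unital algebra homomorphisms $\blacktriangledown$ and $(\Psi\otimes\id_A)\circ\blacktriangledown$ from $A$ into the tensor-product algebra $\Hc\otimes A$. The equalizer of unital algebra maps is automatically a unital subalgebra, which immediately gives $1_A\in B$ (using $S_\delta^2(1)=1$) and closure under multiplication.

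For (ii), apply $\id_\Hc \otimes \blacktriangledown$ to the defining equation of $B$ and use coassociativity to land in $\Hc\otimes \Hc\otimes A$. The inclusion $\blacktriangledown(a)\in\Hc\otimes B$ then reduces---by pairing the outer $\Hc$-factor with arbitrary linear functionals---to the identity
\[
a\ns{-1}\ps{1}\otimes \sigma^{-1}S_\delta^2(a\ns{-1}\ps{2})\sigma\otimes a\ns{0} = a\ns{-1}\ps{1}\otimes a\ns{-1}\ps{2}\otimes a\ns{0}.
\]
Establishing this identity is the main obstacle: since $\Psi$ is not a coalgebra map, it does not follow formally from the equalizer description in step (i). Instead one uses the explicit expansion $S_\delta^2(h) = \delta(h\ps{1}) \delta^{-1}(h\ps{3}) S^2(h\ps{2})$ together with the grouplike property of $\sigma$ to migrate the $\Psi$-twist from the outer $\Hc$-factor---where the defining equation of $B$ naturally places it after coassociativity---to the inner one.

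Once $B$ is established as an $\Hc$-comodule subalgebra, the $^B\Hc$-in involution condition holds for every $a\in B$ by the very definition of $B$, and the modular pair condition $\delta(\sigma)=1$ is part of the hypothesis. The preceding lemma then applies verbatim with $B$ in place of $A$, yielding that $^\sigma\mathbb{C}_\delta$ is a $^B\Hc$-SAYD module.
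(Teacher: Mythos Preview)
Your outline for (i) and (iii) matches the paper's argument exactly (phrased a bit more categorically). The issue is in step (ii).

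You assert that $\Psi(h)=\sigma^{-1}S_\delta^2(h)\sigma$ is \emph{not} a coalgebra map, and that therefore the comodule stability of $B$ cannot be read off formally from the equalizer description. This is mistaken: $S_\delta^2$ \emph{is} a coalgebra map. Writing $S_\delta^2(h)=\delta(h\ps{1})\,\delta^{-1}(h\ps{3})\,S^2(h\ps{2})$ (your own formula) and using that $S^2$ is a coalgebra map together with the convolution identity $\delta^{-1}*\delta=\ve$, one checks directly that $\Delta\circ S_\delta^2=(S_\delta^2\otimes S_\delta^2)\circ\Delta$. Conjugation by the grouplike $\sigma$ is also a coalgebra map, so $\Psi$ is one.

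The paper's proof of (ii) uses precisely this fact. Starting from the target expression $b\ns{-1}\otimes\Psi(b\ns{0}\ns{-1})\otimes b\ns{0}\ns{0}$, one first applies $b\in B$ to replace $b\ns{-1}$ by $\Psi(b\ns{-1})$, then uses coassociativity to get $\Psi(b\ns{-1}\ps{1})\otimes\Psi(b\ns{-1}\ps{2})\otimes b\ns{0}$, then uses the coalgebra-map property of $\Psi$ to rewrite this as $(\Delta\otimes\id)\bigl(\Psi(b\ns{-1})\otimes b\ns{0}\bigr)$, and finally invokes $b\in B$ once more. Everything collapses to $b\ns{-1}\otimes b\ns{0}\ns{-1}\otimes b\ns{0}\ns{0}$.

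Your proposed workaround---migrating the twist from the outer to the inner tensor leg via the explicit expansion of $S_\delta^2$---is left as a sketch and, as stated, does not obviously close. The ``migration'' you describe is exactly what the coalgebra-map property of $\Psi$ accomplishes; any honest execution of your expansion would amount to rediscovering that property in the middle of the computation. So the gap is not fatal, but the stated reason for avoiding the direct route is wrong, and the alternative route is not actually carried out.
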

\begin{proof}
   Since $A$ is  an $\Hc$-comodule algebra and $S^{2}_{\delta}$  is an algebra map, the following computation shows that $B$ is a subalgebra of $A$. More precisely for any $a, b\in B$ we have
   \begin{align*}
   &\sigma^{-1} S_{\delta}^{2}((ab)\ns{-1}) \sigma  \ot (ab)\ns{0}= \sigma^{-1} S_{\delta}^{2}(a\ns{-1}) S_{\delta}^{2}(b\ns{-1})\sigma  \ot a\ns{0}b\ns{0}\\
   &=\sigma^{-1} S_{\delta}^{2}(a\ns{-1}) \sigma \sigma^{-1}S_{\delta}^{2}(b\ns{-1})\sigma \ot a\ns{0}b\ns{0}= a\ns{-1}b\ns{-1}\ot a\ns{0}b\ns{0}.
   \end{align*}
            To prove that $\Hc$ coacts on $B$, it is enough to show that for any $b\in B$ we have  $b\ns{-1}\ot b\ns{0}\in \Hc \ot B$,
            \begin{align*}
              &b\ns{-1}\ot \sigma^{-1}S_{\delta}^{2}(b\ns{0}\ns{-1}) \sigma \ot b\ns{0}\ns{0}\\
              &=\sigma^{-1} S_{\delta}^{2}(b\ns{-1}) \sigma\ot \sigma^{-1}S_{\delta}^{2}(b\ns{0}\ns{-1}) \sigma \ot b\ns{0}\ns{0}\\
              &=\sigma^{-1} S_{\delta}^{2}(b\ns{-1}\ps{1}) \sigma\ot \sigma^{-1}S_{\delta}^{2}(b\ns{-1}\ps{2}) \sigma \ot b\ns{0}\\
              &=\sigma^{-1} S_{\delta}^{2}(b\ns{-1})\ps{1} \sigma\ot \sigma^{-1}S_{\delta}^{2}(b\ns{-1})\ps{2} \sigma \ot b\ns{0}\\
              &=\left (\sigma^{-1} S_{\delta}^{2}(b\ns{-1}) \sigma\right )\ps{1}\ot \left(\sigma^{-1}S_{\delta}^{2}(b\ns{-1}) \sigma \right)\ps{2} \ot b\ns{0}\\
              & =b\ns{-1}\ps{1}\ot b\ns{-1}\ps{2}\ot b\ns{0}= b\ns{-1}\ot b\ns{0}\ns{-1}\ot b\ns{0}\ns{0}.
            \end{align*}
            We use   $b\in B$ in the first equality, the coassociativity of the coaction in the second equality  and  the coalgebra map property  of the map $S_{\delta}^{2}$ in the third equality.
\end{proof}
One notes that if $f\in ^{\Hc}C^n(A,M) $ then $f\mid_B \in ^{\Hc}C^n(B,M)$. This provides us a map
$^{\Hc}HC^n(B,M)\longrightarrow ^{\Hc}HC^n(A,M)$.

 \begin{definition}
 Let $\mathcal{H}$ be a Hopf algebra and $A$ be an algebra which is also an $\mathcal{H}$-comodule where the coaction of $\mathcal{H}$ on $A^{\ot (n+1)}$ is diagonal. A module-comodule $M$ over $\mathcal{H}$ is called an $_A\mathcal{H}$-Hopf cyclic coefficients, and abbreviated by $_A\mathcal{H}$-HCC,   if the cosimplicial and cyclic operators on $\Hom^{\Hc}(A^{\ot (n+1)}, M)$ are well-defined and make it a cocyclic module.
 \end{definition}

 \begin{proposition}
   Let  $M$ be a right-left module comodule over $\mathcal{H}$ and $A$ a right $\mathcal{H}$-comodule algebra. If $M$ is a $^A\mathcal{H}$-SAYD module, then $M$ is an $^A\mathcal{H}$-HCC.
 \end{proposition}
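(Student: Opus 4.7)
The plan is to verify two things: (a) each of the operators $\delta_i$, $\sigma_i$, $\tau_n$ listed in \eqref{comodule algebra} maps the space of $\mathcal{H}$-colinear morphisms $\Hom^{\mathcal{H}}(A^{\otimes(n+1)},M)$ to itself, and (b) these operators satisfy the cocyclic identities. The comodule algebra structure on $A$ handles almost everything, and the two $^A\mathcal{H}$-SAYD axioms are precisely what is needed for the two places where it does not suffice.

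First I would treat the "easy" operators. For $0 \le i < n$, both $\delta_i f$ and $\sigma_i f$ are built from $f$ by inserting a product $a_i a_{i+1}$ or a unit, without any right action of $\mathcal{H}$ on the output. Since $A$ is an $\mathcal{H}$-comodule algebra, multiplication is $\mathcal{H}$-colinear and the diagonal coaction on $A^{\otimes(n+1)}$ intertwines these insertions with the coaction on $A^{\otimes n}$, so $\mathcal{H}$-colinearity of $f$ transfers to $\delta_i f$ and $\sigma_i f$ automatically. The nontrivial cases are $\delta_n$ and $\tau_n$: both have the form $(a_0 \otimes \cdots \otimes a_n) \mapsto f(a_n\ns{0}\otimes \cdots) a_n\ns{-1}$. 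To check colinearity of $\tau_n f$, I would compute $\blacktriangledown\bigl(f(a_n\ns{0}\otimes a_0 \otimes \cdots \otimes a_{n-1}) a_n\ns{-1}\bigr)$ by applying the $^A\mathcal{H}$-AYD axiom (with $a=a_n$ and $\widetilde b = a_0 \otimes \cdots \otimes a_{n-1}$), then use the $\mathcal{H}$-colinearity of $f$ on the diagonal coaction of $A^{\otimes n}$ and the coassociativity of $\blacktriangledown$ on $a_n$. The two uncancelled factors $S(a_n\ns{-1}\ps{3})$ and $a_n\ns{-1}\ps{1}$ combine with the piece $a_n\ns{0}\ns{-1}$ arising from the colinearity of $f$ to collapse via the antipode axiom, leaving exactly $a_0\ns{-1}\cdots a_n\ns{-1} \otimes (\tau_n f)(a_0\ns{0}\otimes\cdots\otimes a_n\ns{0})$. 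An entirely parallel computation handles $\delta_n f$.

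Next I would check the cocyclic identities $\delta_j \delta_i = \delta_i \delta_{j-1}$ ($i<j$), the $\sigma$-relations, the mixed relations $\delta_i \tau_n = \tau_{n+1} \delta_{i-1}$ for $i\ge 1$, and $\delta_0 \tau_n = \delta_n$, $\sigma_0 \tau_n = \tau_{n-1}^2 \sigma_n$, etc. These are formal consequences of the associativity of $A$, the fact that the coaction $\blacktriangledown$ is a unital algebra homomorphism into $\mathcal{H}\otimes A$, and the defining formulae \eqref{comodule algebra}; the verifications are identical in structure to the classical SAYD case and do not invoke either the AYD or stability axioms beyond the colinearity already established.

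The final and most characteristic step is the cyclicity relation $\tau_n^{n+1} = \Id$. Iterating $\tau_n$ once produces an extra factor on the right: an induction on $k$ gives
\begin{equation*}
(\tau_n^k f)(a_0\otimes \cdots \otimes a_n) = f(a_{n-k+1}\ns{0}\otimes\cdots\otimes a_n\ns{0}\otimes a_0\otimes\cdots\otimes a_{n-k}) \, a_{n-k+1}\ns{-1}\cdots a_n\ns{-1},
\end{equation*}
so at $k=n+1$ one obtains $f(a_0\ns{0}\otimes\cdots\otimes a_n\ns{0}) \, a_0\ns{-1}\cdots a_n\ns{-1}$, which under the diagonal coaction on $A^{\otimes(n+1)}$ is exactly $f(\widetilde a\ns{0})\,\widetilde a\ns{-1}$. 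The $^A\mathcal{H}$-stability axiom collapses this to $f(\widetilde a)$, completing the proof. The main obstacle is bookkeeping in the colinearity verification of $\tau_n f$, where one must keep straight the interplay between the three Sweedler legs of $\Delta^2(a_n\ns{-1})$ and the "deeper" coaction index on $a_n\ns{0}$; once this is handled, the AYD condition does exactly the cancellation it was designed for.
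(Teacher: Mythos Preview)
Your proposal is correct and follows essentially the same approach as the paper: the paper likewise reduces to showing that $\tau_n f$ is $\mathcal{H}$-colinear via the $^A\mathcal{H}$-AYD condition together with colinearity of $f$ and coassociativity, and then obtains $\tau_n^{n+1}=\Id$ from the $^A\mathcal{H}$-stability axiom. Your treatment is slightly more explicit about the remaining operators and the cosimplicial identities, which the paper leaves implicit under ``it is enough to show that the cyclic map is well-defined,'' but the substance is the same.
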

 \begin{proof}
   It is enough to show that the cyclic map is well-defined. The following computation proves that  $\tau f$ is a left $\mathcal{H}$-comodule map.
   \begin{align*}
    & \left((\tau_n f)(a_0\ot \cdots \ot a_n)\right)\ns{-1}\ot \left((\tau_n f)(a_0\ot \cdots \ot a_n)\right)\ns{0}\\
    &=\left(f(a_n\ns{0}\ot a_0\ot \cdots \ot a_{n-1})a_n\ns{-1}\right)\ns{-1}\ot \\
    &\left(f(a_n\ns{0}\ot a_0\ot \cdots \ot a_{n-1})a_n\ns{-1}\right)\ns{0}\\
    &=S(a_n\ns{-1}\ps{3})\left(f(a_n\ns{0}\ot a_0\ot \cdots \ot a_{n-1})\right)\ns{-1}a_n\ns{-1}\ps{2}\ot \\
    &\ot \left(f(a_n\ns{0}\ot a_0\ot \cdots \ot a_{n-1})\right)\ns{0}a_n\ns{-1}\ps{1}\\
    &=S(a_n\ns{-1}\ps{3})a_n\ns{0}\ns{-1}a_0\ns{-1}\cdots a_{n-1}\ns{-1}a_n\ns{-1}\ps{1}\ot \\
    &\ot f(a_n\ns{0}\ns{0}\ot a_0\ns{0}\ot \cdots \ot a_{n-1}\ns{0})a_n\ns{-1}\ps{2}\\
    &=S(a_n\ns{-1}\ps{3})a_n\ns{-1}\ps{4}a_0\ns{-1}\cdots a_{n-1}\ns{-1}a_n\ns{-1}\ps{1}\ot \\
    &\ot f(a_n\ns{0}\ot a_0\ns{0}\ot \cdots \ot a_{n-1}\ns{0})a_n\ns{-1}\ps{2}\\
    &=a_0\ns{-1}\cdots a_n\ns{-1}\ot f(a_n\ns{0}\ns{0}\ot a_0\ns{0}\ot \cdots \ot a_{n-1}\ns{0})a_n\ns{0}\ns{-1}\\
    &=a_0\ns{-1}\cdots a_n\ns{-1}\ot (\tau f)(a_0\ns{0}\ot \cdots \ot a_{n}\ns{0}).\\
   \end{align*}
   We use $^A\mathcal{H}$-AYD condition in the second equality, the $\mathcal{H}$-comodule map property of $f$ in the third equality and the coassociativity of the coaction for the element $a_n$ in the fourth and fifth equalities. To prove cyclicity, using $_A\mathcal{H}$-stability condition we have,
   \begin{align*}
     &\tau^{n+1}(a_0\ot \cdots \ot a_n)= f(a_0\ns{0}\ot \cdots \ot a_n\ns{0})a_0\ns{-1}\cdots a_n\ns{-1}\\
     &=f(\widetilde{a}\ns{0})\widetilde{a}\ns{-1}=f(a_0\ot \cdots \ot a_n).
   \end{align*}
   \end{proof}

     \begin{lemma}
  Let $A$ be a left $\Hc$-comodule algebra. If $\mathcal{H}$ coacts on $A$ commutatively, i.e.,
  \begin{equation}\label{cond7}
   {a}\ns{0}\ot {a}\ns{-1}g= {a}\ns{0} \ot g {a}\ns{-1}, \quad g \in \mathcal{H}, a\in A,
  \end{equation}
  then any comodule $M$ over $\mathcal{H}$ with a trivial action defines a $^A\mathcal{H}$-SAYD module.
  \end{lemma}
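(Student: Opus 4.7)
The plan is to verify the two defining properties of an $^A\mathcal{H}$-SAYD module for $M$ equipped with the trivial right $\mathcal{H}$-action $m\cdot h=\varepsilon(h)m$, using the commutativity hypothesis (\ref{cond7}) at just one point.

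The stability condition $\varphi(\widetilde{a}\ns{0})\widetilde{a}\ns{-1}=\varphi(\widetilde{a})$ is immediate: under the trivial action,
\[
\varphi(\widetilde{a}\ns{0})\widetilde{a}\ns{-1}
 =\varepsilon(\widetilde{a}\ns{-1})\varphi(\widetilde{a}\ns{0})
 =\varphi\bigl(\varepsilon(\widetilde{a}\ns{-1})\widetilde{a}\ns{0}\bigr)
 =\varphi(\widetilde{a}),
\]
where the last equality uses the counit axiom $\varepsilon(a_i\ns{-1})a_i\ns{0}=a_i$ on each factor of the diagonal coaction.

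For the $^A\mathcal{H}$-AYD condition I would first simplify both sides by the trivial action. On the left-hand side, $\varphi(a\ns{0}\ot\widetilde{b})\,a\ns{-1}=\varepsilon(a\ns{-1})\varphi(a\ns{0}\ot\widetilde{b})=\varphi(a\ot\widetilde{b})$, so left $\mathcal{H}$-colinearity of $\varphi$ with respect to the diagonal coaction rewrites the LHS as $a\ns{-1}\widetilde{b}\ns{-1}\ot\varphi(a\ns{0}\ot\widetilde{b}\ns{0})$, where $\widetilde{b}\ns{-1}:=a_1\ns{-1}\cdots a_n\ns{-1}$ and $\widetilde{b}\ns{0}:=a_1\ns{0}\ot\cdots\ot a_n\ns{0}$. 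On the RHS, the trailing $a\ns{-1}\ps{2}$ is absorbed via $\varepsilon$ into the $M$-action on $\varphi(\cdots)\ns{0}$; applying colinearity of $\varphi$ and merging the resulting secondary coaction on $a\ns{0}$ back into $\Delta^{(2)}(a\ns{-1})$ by coassociativity produces
\[
S(a\ns{-1}\ps{2})\,a\ns{-1}\ps{3}\,\widetilde{b}\ns{-1}\,a\ns{-1}\ps{1}\ot\varphi(a\ns{0}\ot\widetilde{b}\ns{0}),
\]
with Sweedler now referring to $\Delta^{(2)}$ on $a\ns{-1}$.

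The adjacent factors $S(a\ns{-1}\ps{2})a\ns{-1}\ps{3}$ collapse by the antipode identity, viewed through the grouping $\Delta^{(2)}=(\id\ot\Delta)\Delta$, reducing the RHS to $\widetilde{b}\ns{-1}a\ns{-1}\ot\varphi(a\ns{0}\ot\widetilde{b}\ns{0})$. To match this with the LHS, I would apply (\ref{cond7}) to $\widetilde{b}\in A^{\ot n}$ with $g=a\ns{-1}$, tensor with $a\ns{0}$ on the left, and compose with $\varphi$ on the first $n+1$ slots; this gives $\widetilde{b}\ns{-1}a\ns{-1}\ot\varphi(a\ns{0}\ot\widetilde{b}\ns{0})=a\ns{-1}\widetilde{b}\ns{-1}\ot\varphi(a\ns{0}\ot\widetilde{b}\ns{0})$, which is exactly the LHS. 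The main point of care is the Sweedler bookkeeping when passing between $\Delta$ and $\Delta^{(2)}$ via coassociativity and identifying the correct grouping for the antipode cancellation; the invocation of (\ref{cond7}) itself is essentially a one-line substitution at the very end.
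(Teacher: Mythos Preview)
Your proof is correct and follows essentially the same approach as the paper: both verify stability via the trivial action and counitality, and both reduce the $^A\mathcal{H}$-AYD identity, through the trivial action and $\mathcal{H}$-colinearity of $\varphi$, to a commutation of $\widetilde{a}\ns{-1}$ (or $\widetilde{b}\ns{-1}$) with an element of $\mathcal{H}$, which is then supplied by \eqref{cond7}. The only cosmetic difference is that the paper runs the computation with a generic $h\in\mathcal{H}$ in place of $a\ns{-1}$ and applies \eqref{cond7} to the full tuple $\widetilde{a}$, whereas you keep the specific $a\ns{-1}$ and apply \eqref{cond7} to $\widetilde{b}$; your bookkeeping with $\Delta^{(2)}$ and the antipode cancellation $S(h\ps{2})h\ps{3}=\varepsilon(h\ps{2})$ is handled correctly.
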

  \begin{proof}
  The following computation proves the $^A\mathcal{H}$-AYD condition.
    \begin{align*}
    &f(\widetilde{a}h)\ns{-1}\ot f(\widetilde{a}h)\ns{-1}=\ve(h) f(\widetilde{a})\ns{-1}\ot f(\widetilde{a})\ns{0}\\
    &=\ve(h)a\ns{-1}\ot f(\widetilde{a}\ns{0})=S(h\ps{2})h\ps{1}\widetilde{a}\ns{-1}\ot f(\widetilde{a}\ns{0})\\
    &=S(h\ps{3}) \widetilde{a}\ns{-1}h\ps{1}\ot \ve(h\ps{2})f(\widetilde{a}\ns{0})\\
    &=S(h\ps{3}) f(\widetilde{a})\ns{-1}h\ps{1}\ot f(\widetilde{a})\ns{0}h\ps{2}.
    \end{align*}
    We use \eqref{cond7} in the fourth equality. The counitality of the coaction implies the $^A\mathcal{H}$-stability condition.
  \end{proof}

  One notes that for any bicrossed product Hopf algebra $\Hc= \Fc\acl \Uc$, the Hopf algebra $\Fc$ is a right $\Hc$-comodule algebra by the coaction defined by
  \begin{align} \label{sym3}
  &f\longmapsto f\ps{1}\ot (f\ps{2}\acl 1_{\Uc}).
  \end{align}
  Here we introduce an example of a  commutative coaction.
\begin{example}\rm{
  Let $\Hc= \Fc \acl \Uc$ be a bicrossed Hopf algebra where $\Uc$ is not commutative. Suppose  $\Fc$ is commutative and $\Uc$ acts trivially on $\Fc$. The following computation shows that with respect to the coaction defined in \eqref{sym3}, $\Hc$ coacts commutatively on   $\Hc$-comodule algebra $\Fc$.
  \begin{align*}
  &{f}\ns{0}\ot {f}\ns{1}(g\lrbicross u)
 =f\ps{1} \ot (f\ps{2}\lrbicross 1)(g\lrbicross u)\\
  &=f\ps{1} \ot (f\ps{2}g\lrbicross u)
  =f\ps{1} \ot (gf\ps{2}\lrbicross u)\\
  &=f\ps{1}\ot  \ot ( gu\ps{1}\triangleright f\ps{2}\lrbicross u\ps{2})
  =f\ps{1} \ot (g\lrbicross u)(f\ps{2}\lrbicross 1)\\
  &{f}\ns{0}\ot (g\lrbicross u) {f}\ns{1}.\\
  \end{align*}
We use the commutativity of $\Fc$ in the third equality.
  }
\end{example}
\begin{lemma}
  Let $A$ be a left $\Hc$-comodule algebra. If $\mathcal{H}$ coacts on $A$ cocommutatively, \ie for any $a\in A$, $\widetilde{b}\in A^{\ot n}$ and $h\in \mathcal{H}$;
  \begin{equation}\label{cond13}
    \widetilde{b}\ns{-1}a\ns{-1}\ps{1}\ot a\ns{-1}\ps{2} \ot a\ns{0}\ot \widetilde{b}\ns{0} = a\ns{-1}\ps{2}\widetilde{b}\ns{-1} \ot  a\ns{-1}\ps{1}\ot a\ns{0}\ot \widetilde{b}\ns{0},
  \end{equation}
  then any module $M$ over $\mathcal{H}$ with the trivial coaction  defines a  $^A\mathcal{H}$-HCC module.
\end{lemma}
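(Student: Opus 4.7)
The plan is to verify the two properties required of an $^A\Hc$-HCC module: that each operator in the cocyclic structure \eqref{comodule algebra} restricts to a well-defined endomorphism of $\Hom^{\Hc}(A^{\ot(n+1)},M)$, and that $\tau_n^{n+1}=\Id$. The inner cofaces $\delta_i$ with $0\le i<n$ and the codegeneracies $\sigma_i$ only multiply adjacent slots or insert $1$; since $A$ is a comodule algebra carrying the diagonal coaction on tensor powers, these operations automatically preserve $\Hc$-colinearity. The only nontrivial checks are for $\delta_n$ and $\tau_n$, which pull $a_n\ns{-1}$ out to act on the value in $M$, and I will carry them out in parallel, treating $\tau_n$ as representative.

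To verify colinearity of $\tau_n f$, I expand $\widetilde{a}\ns{-1}\ot(\tau_n f)(\widetilde{a}\ns{0})$ by writing out the diagonal coaction of $\widetilde{a}$ together with the coaction on $a_n\ns{0}$ produced by $\tau_n$. Coassociativity collects the two coactions of $a_n$ into a single double-coaction $a_n\ns{-1}\ps{1}\ot a_n\ns{-1}\ps{2}\ot a_n\ns{0}$, placing $a_n\ns{-1}\ps{1}$ alongside $a_0\ns{-1}\cdots a_{n-1}\ns{-1}$ in the $\Hc$ slot and leaving $a_n\ns{-1}\ps{2}$ acting on the $M$-value. The cocommutativity hypothesis \eqref{cond13}, applied with $a=a_n$ and $\widetilde{b}=a_0\ot\cdots\ot a_{n-1}$, then swaps $a_n\ns{-1}\ps{1}$ and $a_n\ns{-1}\ps{2}$ relative to $\widetilde{b}\ns{-1}$. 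A second use of coassociativity identifies the resulting first $\Hc$ factor as $\widetilde{c}\ns{-1}$ for $\widetilde{c}=a_n\ns{0}\ot a_0\ot\cdots\ot a_{n-1}$, and the $\Hc$-colinearity of $f$ (with $M$ trivial, this reads $\widetilde{c}\ns{-1}\ot f(\widetilde{c}\ns{0})=1\ot f(\widetilde{c})$) collapses the expression to $1\ot(\tau_n f)(\widetilde{a})$. The same sequence of steps, with the multiplication $a_n\ns{0}a_0$ in place of the cyclic permutation, handles $\delta_n$.

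For cyclicity, iterating $\tau_n$ a total of $n+1$ times together with the fact that $A$ is a comodule algebra yields $(\tau_n^{n+1}f)(\widetilde{a})=f(\widetilde{a}\ns{0})\widetilde{a}\ns{-1}$. Applying the action map $\Hc\ot M\to M$, $h\ot m\mapsto m\cdot h$, to both sides of the colinearity identity $\widetilde{a}\ns{-1}\ot f(\widetilde{a}\ns{0})=1\ot f(\widetilde{a})$ gives $f(\widetilde{a}\ns{0})\widetilde{a}\ns{-1}=f(\widetilde{a})$, so $\tau_n^{n+1}=\Id$. The remaining cosimplicial identities follow formally from the tensor combinatorics and the algebra structure of $A$. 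The main obstacle is the colinearity check for $\tau_n$: in a generic comodule algebra, the factor $a_n\ns{-1}$ appearing in the module action cannot be brought past the other $a_i\ns{-1}$'s sitting in the $\Hc$ slot of the diagonal coaction, and without this rearrangement the colinearity of $f$ cannot be invoked. Hypothesis \eqref{cond13} is exactly the relation that enables the needed commutation.
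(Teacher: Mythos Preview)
Your proof is correct and follows essentially the same route as the paper's: both arguments reduce the question to colinearity of $\tau_n$, expand $\widetilde{a}\ns{-1}\ot(\tau_n f)(\widetilde{a}\ns{0})$, use coassociativity on $a_n$, apply the cocommutativity hypothesis \eqref{cond13} to swap $a_n\ns{-1}\ps{1}$ and $a_n\ns{-1}\ps{2}$ relative to $\widetilde{b}\ns{-1}$, and then invoke colinearity of $f$ together with the triviality of the $M$-coaction to collapse to $1\ot(\tau_n f)(\widetilde{a})$. Your explicit verification of $\tau_n^{n+1}=\Id$ via the identity $f(\widetilde{a}\ns{0})\widetilde{a}\ns{-1}=f(\widetilde{a})$ (obtained by acting on the colinearity relation) is a welcome addition that the paper leaves implicit.
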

\begin{proof}
 It is enough to show that the cyclic map is  a left $\mathcal{H}$-comodule map.
\begin{align*}
  &(a_0\ot \cdots \ot a_n)\ns{-1}\ot (\tau_n f)\left(  (a_0\ot \cdots \ot a_n)\ns{0})\right)\\
  &=a_0\ns{-1}\cdots a_n\ns{-1}\ot (\tau_n f)(a_0\ns{0}\ot \cdots \ot a_n\ns{0})\\
  &=a_0\ns{-1}\cdots a_n\ns{-1}\ot f(a_n\ns{0}\ns{0}\ot a_0\ns{0} \cdots \ot a_{n-1}\ns{0})a_n\ns{0}\ns{-1}\\
  &=a_0\ns{-1}\cdots a_{n-1}\ns{-1}a_n\ns{-1}\ps{1}\ot f(a_n\ns{0}\ot a_0\ns{0} \cdots \ot a_{n-1}\ns{0})a_n\ns{-1}\ps{2}\\
  &=a_n\ns{-1}\ps{2}a_0\ns{-1}\cdots a_{n-1}\ns{-1}\ot f(a_n\ns{0}\ot a_0\ns{0} \cdots \ot a_{n-1}\ns{0})a_n\ns{-1}\ps{1}\\
  &=f(a_n\ns{0}\ot a_0\ot \cdots \ot a_{n-1})\ns{-1}\ot f(a_n\ns{0}\ot a_0\ot \cdots \ot a_{n-1})\ns{0}a_n\ns{-1}\\
  &=1 \ot f(a_n\ns{0}\ot a_0\ot \cdots \ot a_{n-1})a_n\ns{-1}\\
  &=\left(f(a_n\ns{0}\ot a_0\ot \cdots \ot a_{n-1})a_n\ns{-1}\right)\ns{-1}\ot \\
  &\ot \left(f(a_n\ns{0}\ot a_0\ot \cdots \ot a_{n-1})a_n\ns{-1}\right)\ns{0}\\
  &\left((\tau_n f)(a_0\ot \cdots \ot a_n)\right)\ns{-1}\ot \left((\tau_n f)(a_0\ot \cdots \ot a_n)\right)\ns{0}.
\end{align*}
 We use the relation \eqref{cond13} in the fourth equality,  the coassociativity of the coaction for the element $a_n$ and the left $\mathcal{H}$-comodule map property of the map $f$ in the fifth equality and the triviality of the coaction of $\mathcal{H}$ on $M$ in the sixth  and seventh equalities.
\end{proof}
Here we introduce an example of  a cocommutative coaction.
\begin{example}\rm{

Let $\Hc= \Fc \acl \Uc$ be a bicrossed product Hopf algebra where $\Fc$ is commutative and cocommutative. Then we consider the right $\Hc$-comodule algebra $\Fc$ with the coaction defined in \eqref{sym3}. The following computation shows that this coaction is cocommutative.
\begin{align*}
  &g\ns{0}\ot \widetilde{f}\ns{0}\ot \widetilde{f}\ns{1}g\ns{1}\ps{1}\ot g\ns{1}\ps{2} \\
  &=g\ns{0}\ot f_1\ns{0}\ot \cdots \ot f_n\ns{0}\ot f_1\ns{1}\cdots f_n\ns{1}g\ns{1}\ps{1}\ot g\ns{1}\ps{2}\\
  &=g\ps{1}\ot f_1\ps{1}\ot \cdots \ot f_n\ps{1}\ot f_1\ps{2}\cdots f_n\ps{2}g\ps{2}\ot g\ps{3}\\
 &=g\ps{1}\ot f_1\ps{1}\ot \cdots \ot f_n\ps{1}\ot g\ps{3}f_1\ps{2}\cdots f_n\ps{2}\ot g\ps{2}\\
 &=g\ns{0}\ot f_1\ns{0}\ot \cdots \ot f_n\ns{0}\ot g\ns{1}\ps{2} f_1\ns{1}\cdots f_n\ns{1}\ot g\ns{1}\ps{1}\\
 &g\ns{0}\ot \widetilde{f}\ns{0}\ot g\ns{1}\ps{2}\widetilde{f}\ns{1}\ot g\ns{1}\ps{1}. \\
\end{align*}
We use the commutativity and cocommutativity of $\Fc$ in the third equality.
  Therefore any module $M$ over $\mathcal{H}= \Fc \acl \Uc$ with the trivial coaction  defines a  $^{\Fc}{\Hc}$-HCC.
  One easily checks  that  since $\Fc$ is cocommutative any module $M$ with the trivial coaction in fact is a $^{\Fc}\Hc$-SAYD module. Therefore the $^{\Fc}\Hc$-HCC property of $M$ is not a result of  the cocommutativity of the coaction.
}
\end{example}

\begin{example}\rm{
In the special case of the previous example, consider $\Hc$ to be the co-opposite Hopf algebra of Schwarzian Hopf algebra $\Hc_{1s}^{cop}$  which is in fact the quotient of co-opposite Hopf algebra of Connes-Moscovici Hopf algebra $H^{cop}_1$ by the ideal $S$ generated by the Schwarzian  element $\d'_2= \d_2- \frac{1}{2}\d_1^{2}$. In fact $\Hc_{1s}^{cop}$ is generated by $X$, $Y$ and $Z=\d_1$ where
\begin{equation*}
  [Y,X]=X, \qquad [Y,Z]=Z, \qquad [X, Z]=\frac{1}{2}Z^2.
\end{equation*}
The coalgebra stucture and antipode are defined  similar to the one for $\Hc_1^{cop}$. The Hopf algebra $\Uc$ acts on $\Fc$ via
\begin{align*}
  X\triangleright Z= -\frac{1}{2}Z^2,  \qquad Y\triangleright Z= -Z,
\end{align*}
and $\Fc$ coacts on $\Uc$ via
\begin{align*}
  \blacktriangledown(X)= X\ot 1 + Y\ot Z, \qquad \blacktriangledown(Y)= Y\ot 1.
\end{align*}
Indeed we have $\Hc_{1s}^{cop}= \mathbb{C}[Z]\lrbicross \Uc$.
}
\end{example}
\begin{remark}
  \rm{Let $\mathcal{SAYD}_{\Hc}$ denotes the category of SAYD modules,  $^A\mathcal{H}\text{-}\mathcal{SAYD}$  denotes the category of  $^A\mathcal{H}$-SAYD modules and  $^A\mathcal{H}\text{-}\mathcal{HCC}$ denotes the category of $^A\mathcal{H}$-Hopf cyclic cohomology coefficients. We have seen commutative coactions as a source of $^A\mathcal{H}\text{-}\mathcal{SAYD}$  and  cocommutative coactions as a source of  $^A\mathcal{H}\text{-}\mathcal{HCC}$. Based on our arguments  in this subsection, one has  the following proper inclusions of categories,
  \begin{equation*}
  {\mathcal{SAYD}_{\Hc} \quad \subsetneqq   \quad  ^A\mathcal{H}\text{-}\mathcal{SAYD} \quad    \subsetneqq    \quad ^A\mathcal{H}\text{-}\mathcal{HCC}.}
\end{equation*}

  }
\end{remark}

    \subsection{The $\mathcal{H}^C$-SAYD and HCC modules for comodule coalgebras}
In this subsection  we generalize the  Hopf cyclic  cohomology of  comodule coalgebras with coefficients.
 Let $C$ be a right $\mathcal{H}$-comodule coalgebra and $M$ a right-left SAYD module on $\mathcal{H}$. We set
   \begin{equation}
     ^{\mathcal{H}}C^{n}(C, M)= C^{\ot(n+1)}\square_\mathcal{H} M.
   \end{equation}
   The following  maps define a cocyclic module for $^{\mathcal{H}}C^{n}(C, M)$.
   \begin{align}\label{comodule-coalgebra}\nonumber
    &\d_i( c_0\ot \cdots \ot c_n\ot m)=  c_0\ot\cdots \ot \Delta(c_i)\ot \cdots\ot c_n\ot m,\\ \nonumber
    &\d_n( c_0\ot \cdots \ot c_n\ot m)=c_0\ps{2}\ot c_1\ot\cdots\ot c_n\ot c_0\ps{1}\ns{0}\ot m c_0\ps{1}\ns{1},\\
    &\s_i( c_0\ot \cdots \ot c_n\ot m)= c_0\ot\cdots \ot \varepsilon(c_{i+1})\ot \cdots\ot c_n\ot m,\\ \nonumber
    &\tau_n( c_0\ot \cdots \ot c_n\ot m)= c_1\ot \cdots \ot c_n\ot c_0\ns{0}\ot m c_0\ns{1}.
   \end{align}
   Here $C^{\ot(n+1)}$ is a right $\mathcal{H}$-comodule by diagonal coaction.

\begin{definition}
Let $\mathcal{H}$ be a Hopf algebra and $C$ be a right $\mathcal{H}$-comodule coalgebra. A  right-left module-comodule $M$ over $\mathcal{H}$ is called an $\mathcal{H}^C$-SAYD module if the following $\mathcal{H}^C$-AYD  and stability conditions are satisfied.
\begin{enumerate}
\item[i)] $ c\ns{0}\ot (m c\ns{1})\ns{-1}\ot (m c\ns{1})\ns{0}= \\
 c\ns{0}\ot S(c\ns{1}\ps{3})m\ns{-1}c\ns{1}\ps{1}\ot m\ns{0}c\ns{1}\ps{2},$
\item [ii)] $\widetilde{d}\ns{0}\square_{\Hc} m \widetilde{d}\ns{1}= \widetilde{d}\square_{\Hc} m$,
\end{enumerate}
where $\widetilde{d}=c_0\ot \cdots \ot c_n$.
\end{definition}
   One notices that  since  $\widetilde{d}\ot m\in C^{\ot (n+1)}\square_\mathcal{H} M$, then stability condition is equivalent to
   $$\widetilde{d}\square_{\Hc} m\ns{0}m\ns{-1}= \widetilde{d} \square_{\Hc} m.$$

  It is clear that any SAYD module over $\mathcal{H}$  implies a $\mathcal{H}^C$-SAYD module.

\begin{lemma}\label{first3}
 Let $C$ be a right $\mathcal{H}$-comodule coalgebra.
   If $(\sigma, \delta)$ be a modular pair and $\Hc^C$-in involution, \ie
  \begin{equation}\label{involution-cc}
 c\ns{0}\ot   S^2_{\delta}(c\ns{1})=c\ns{0}\ot \sigma c\ns{1}\sigma^{-1}, \qquad  c\in C,
  \end{equation}
    then $^\sigma\mathbb{C}_{\delta}$ is a $\mathcal{H}^C$-SAYD module.
  \end{lemma}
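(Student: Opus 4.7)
The plan is to mimic the approach used for the analogous lemma in the comodule algebra case, adapted to the right-comodule-coalgebra symmetry. Here $M={}^\sigma\mathbb{C}_\delta$ carries the character action $1\triangleleft h=\delta(h)$ and the grouplike coaction $\blacktriangledown(1)=\sigma\ot 1$, so both defining conditions of an $\Hc^C$-SAYD module collapse to scalar identities in $C\ot\Hc$, which I would verify by direct manipulation using the $\Hc^C$-in-involution hypothesis together with coassociativity.

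For the AYD identity, substitution of $m=1$ reduces the left-hand side to $c\ns{0}\ot\delta(c\ns{1})\sigma\ot 1$ and the right-hand side to $c\ns{0}\ot \delta(c\ns{1}\ps{2}) S(c\ns{1}\ps{3})\sigma c\ns{1}\ps{1}\ot 1$. I would regroup the latter, using coassociativity of $\Delta_\Hc$, as $c\ns{0}\ot S_\delta(c\ns{1}\ps{2})\sigma c\ns{1}\ps{1}\ot 1$, where $S_\delta=\delta\ast S$ is the twisted antipode. Then coassociativity of the coaction, in the form $c\ns{0}\ot c\ns{1}\ps{1}\ot c\ns{1}\ps{2}=c\ns{0}\ns{0}\ot c\ns{0}\ns{1}\ot c\ns{1}$, converts this to $c\ns{0}\ns{0}\ot S_\delta(c\ns{1})\sigma c\ns{0}\ns{1}\ot 1$. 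Now the $\Hc^C$-in-involution hypothesis applied to the inner coaction gives $c\ns{0}\ns{0}\ot c\ns{0}\ns{1}=c\ns{0}\ns{0}\ot\sigma^{-1}S_\delta^{2}(c\ns{0}\ns{1})\sigma$, and substituting causes the adjacent $\sigma\sigma^{-1}$ pair to cancel, producing $c\ns{0}\ns{0}\ot S_\delta(c\ns{1})S_\delta^{2}(c\ns{0}\ns{1})\sigma\ot 1$. Reversing coassociativity yields $c\ns{0}\ot S_\delta(c\ns{1}\ps{2}) S_\delta^{2}(c\ns{1}\ps{1})\sigma\ot 1$, and finally the anti-multiplicativity of $S_\delta$ together with the standard identity $S_\delta(h\ps{1})h\ps{2}=\delta(h)1$ simplifies $S_\delta(c\ns{1}\ps{2})S_\delta^{2}(c\ns{1}\ps{1})=S_\delta\bigl(S_\delta(c\ns{1}\ps{1})c\ns{1}\ps{2}\bigr)=\delta(c\ns{1})1_\Hc$, which returns the left-hand side.

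For stability, I would use the remark preceding the lemma: for $\tilde d\square_\Hc m\in C^{\ot(n+1)}\square_\Hc M$ the cotensor equation $\tilde d\ns{0}\ot\tilde d\ns{1}\ot m=\tilde d\ot m\ns{-1}\ot m\ns{0}$ lets one rewrite condition (ii) equivalently as $\tilde d\square_\Hc m\ns{0}m\ns{-1}=\tilde d\square_\Hc m$, which on the one-dimensional $M={}^\sigma\mathbb{C}_\delta$ reduces to $1\triangleleft\sigma=\delta(\sigma)=1$. This is precisely the modular pair condition.

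The main obstacle is the bookkeeping in the AYD step: the involution hypothesis is stated as an identity under the coaction of a single element of $C$, so one must use coassociativity once to expose an inner coaction, apply the hypothesis there, and then use coassociativity again to recombine, before the anti-multiplicativity of $S_\delta$ can be exploited to finish the simplification.
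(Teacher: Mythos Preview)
Your argument is correct and follows essentially the same route as the paper: both verify the $\Hc^C$-AYD identity by a direct computation that uses coassociativity of the coaction to expose an inner $c\ns{0}\ns{1}$, applies the $\Hc^C$-in-involution hypothesis there, and then collapses the expression via the twisted antipode, while stability is immediate from $\delta(\sigma)=1$. The only cosmetic difference is direction and choice of twisted antipode: the paper starts from the left-hand side and works with $S_\delta^{-1}$ (inserting $\varepsilon=S^{-1}(h\ps{2})h\ps{1}$, pulling $\sigma$ through via anti-multiplicativity of $S_\delta^{-1}$, and simplifying $S_\delta^{-1}\circ S_\delta^2=S_\delta$), whereas you start from the right-hand side and work with $S_\delta$ (using $S_\delta(h\ps{1})h\ps{2}=\delta(h)$ after anti-multiplicativity); these are the same computation read in opposite directions.
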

  \begin{proof} The following computation proves the $\Hc^C$-AYD condition,
  \begin{align*}
   &  c\ns{0}\ot \sigma \delta( c\ns{1})\ot 1\\
   &=  c\ns{0}\ot \sigma S^{-1}(c\ns{1}\ps{2})\delta(c\ns{1}\ps{3})c\ns{1}\ps{1}\ot 1\\
   &= c\ns{0}\ot \sigma S_{\delta}^{-1}(c\ns{1}\ps{2})\sigma^{-1} \sigma c\ns{1}\ps{1}\ot 1\\
   &= c\ns{0}\ot S_{\delta}^{-1}(\sigma c\ns{1}\ps{2}\sigma^{-1}) \sigma c\ns{1}\ps{1}\ot 1\\
   &=  c\ns{0}\ns{0}\ot S_{\delta}^{-1}( S_{\delta}^2(c\ns{1}))\sigma c\ns{0}\ns{1}\ot 1\\
   &=  c\ns{0}\ot S_{\delta}(c\ns{1}\ps{2})\sigma c\ns{1}\ps{1}\ot 1\\
   &= c\ns{0}\ot S(c\ns{1}\ps{3})\sigma c\ns{1}\ps{1}\ot \delta(c\ns{1}\ps{2}).
    \end{align*}
    We use the coassociativity of the coaction and  $\Hc^C$-in involution property in the fourth equality. The stability condition is obvious by the modular pair property.
  \end{proof}


\begin{lemma}
  If $\mathcal{H}$ coacts on $C$ commutatively, \ie
  \begin{equation}\label{cond8}
    c\ns{0}\ot hc\ns{1}= c\ns{0}\ot c\ns{1}h
  \end{equation}
  then any $\mathcal{H}$-comodule $M$  with trivial action becomes a $\mathcal{H}^C$-SAYD module.
\end{lemma}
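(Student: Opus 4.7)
The plan is to verify both defining conditions of an $\mathcal{H}^{C}$-SAYD module in turn, using the trivial action to kill most occurrences of the action.

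\emph{Stability.} With trivial action $m\widetilde{d}\ns{1}=\varepsilon(\widetilde{d}\ns{1})m$, and counitality of the coaction on each tensor factor of $\widetilde{d}=c_{0}\otimes\cdots\otimes c_{n}$ yields $\widetilde{d}\ns{0}\varepsilon(\widetilde{d}\ns{1})=\widetilde{d}$, whence $\widetilde{d}\ns{0}\square_{\mathcal{H}}m\widetilde{d}\ns{1}=\widetilde{d}\square_{\mathcal{H}}m$ is immediate.

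\emph{$\mathcal{H}^{C}$-AYD.} Under the trivial action $(mc\ns{1})\ns{-1}\otimes(mc\ns{1})\ns{0}=\varepsilon(c\ns{1})m\ns{-1}\otimes m\ns{0}$, so the left-hand side collapses to $c\otimes m\ns{-1}\otimes m\ns{0}$; similarly $m\ns{0}c\ns{1}\ps{2}=\varepsilon(c\ns{1}\ps{2})m\ns{0}$, and after absorbing this counit into $c\ns{1}\ps{3}$ via coassociativity the right-hand side reduces to $c\ns{0}\otimes S(c\ns{1}\ps{2})m\ns{-1}c\ns{1}\ps{1}\otimes m\ns{0}$. Thus one must prove
\[ c\otimes m\ns{-1}\otimes m\ns{0}=c\ns{0}\otimes S(c\ns{1}\ps{2})m\ns{-1}c\ns{1}\ps{1}\otimes m\ns{0}. \]
I would start from the left-hand side, write $c=c\ns{0}\varepsilon(c\ns{1})$ and substitute the antipode identity $\varepsilon(c\ns{1})=S(c\ns{1}\ps{2})c\ns{1}\ps{1}$ (the same form already used in the parallel comodule algebra proof earlier in the paper) to reach
\[ c\ns{0}\otimes S(c\ns{1}\ps{2})c\ns{1}\ps{1}m\ns{-1}\otimes m\ns{0}, \]
and then swap $c\ns{1}\ps{1}$ past $m\ns{-1}$ to arrive at the right-hand side.

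\emph{Key technical point.} Hypothesis \eqref{cond8} as stated only permits commuting the full image $c\ns{1}$ with elements of $\mathcal{H}$, not its individual coproduct pieces. To promote it to a commutation rule for $c\ns{1}\ps{1}$, I would apply \eqref{cond8} to the element $c\ns{0}\in C$ in place of $c$, tensor with the outer $c\ns{1}$ on the right, and invoke coassociativity of the coaction in the form $(c\ns{0})\ns{0}\otimes(c\ns{0})\ns{1}\otimes c\ns{1}=c\ns{0}\otimes c\ns{1}\ps{1}\otimes c\ns{1}\ps{2}$, to obtain
\[ c\ns{0}\otimes hc\ns{1}\ps{1}\otimes c\ns{1}\ps{2}=c\ns{0}\otimes c\ns{1}\ps{1}h\otimes c\ns{1}\ps{2},\qquad h\in\mathcal{H}. \]
Applying $S$ to the third tensor factor and then multiplying it onto the left of the second produces $c\ns{0}\otimes S(c\ns{1}\ps{2})c\ns{1}\ps{1}m\ns{-1}=c\ns{0}\otimes S(c\ns{1}\ps{2})m\ns{-1}c\ns{1}\ps{1}$ upon taking $h=m\ns{-1}$, closing the calculation.

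The chief obstacle is precisely this promotion of \eqref{cond8} from the undivided coaction $c\ns{1}$ to its component $c\ns{1}\ps{1}$; once available, the remaining steps mirror the analogous calculation for comodule algebras.
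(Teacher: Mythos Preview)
Your proposal is correct and follows essentially the same route as the paper: reduce both sides via the trivial action, write $\varepsilon(c\ns{1})=S(c\ns{1}\ps{2})c\ns{1}\ps{1}$, and commute $c\ns{1}\ps{1}$ past $m\ns{-1}$ by promoting \eqref{cond8} via coassociativity. The paper carries this out as a single chain of equalities, passing through the form $c\ns{0}\ns{0}\otimes S(c\ns{1})c\ns{0}\ns{1}m\ns{-1}\otimes m\ns{0}$ (i.e.\ your coassociativity rewrite) and applying \eqref{cond8} to $c\ns{0}$ exactly as you describe in your ``key technical point''.
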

\begin{proof}
The $\mathcal{H}^C$-stability condition is obvious.The following computation proves the $\mathcal{H}^C$-AYD condition.
  \begin{align*}
    &c\ns{0}\ve( c\ns{1})\ot m\ns{-1}\ot m\ns{0}\\
    &= c\ns{0}\ot S(c\ns{1}\ps{2})c\ns{1}\ps{1}m\ns{-1}\ot m\ns{0}\\
    &=c\ns{0}\ns{0}\ot S(c\ns{1})c\ns{0}\ns{1}m\ns{-1}\ot m\ns{0}\\
    &=c\ns{0}\ns{0}\ot S(c\ns{1})m\ns{-1}c\ns{0}\ns{1}\ot m\ns{0}\\
    &=c\ns{0}\ot S(c\ns{1}\ps{3})m\ns{-1}c\ns{1}\ps{1}\ot m\ns{0}\ve(c\ns{1}\ps{2})\\
    &=c\ns{0}\ot S(c\ns{1}\ps{3})m\ns{-1}c\ns{1}\ps{1}\ot m\ns{0}c\ns{1}\ps{2}.
  \end{align*}
  We use the \eqref{cond8} in the third equality and coassociativity of the coaction in the fourth equality.
\end{proof}
Let $\Hc=\Fc \acl \Uc$ be any bicrossed product Hopf algebra  where $u\longmapsto u\ns{0}\ot u\ns{1}$ denotes the coaction of $\Fc$ on $\Uc$.
  The Hopf algebra  $\Uc$ is a $\Hc$-comodule coalgebra by the following coaction.
  \begin{align}\label{com7}
    u\longmapsto u\ns{0}\ot u\ns{1}\ot 1_{\Uc}.
  \end{align}

\begin{example}\rm{
 Let $\Hc= \Fc\acl \Uc$ be a bicrossed product Hopf algebra. If $\Fc$ is commutative  and $\Uc$ acts trivially on $\Fc$, then $\Hc$ coacts commutatively on $\Uc$.}
\end{example}

 \begin{definition}
 Let $\mathcal{H}$ be a Hopf algebra and $C$ be a coalgebra, which is also a $\mathcal{H}$-comodule where the coaction of $\mathcal{H}$ on $C^{\ot (n+1)}$ is diagonal. A module-comodule $M$ over $\mathcal{H}$ is called a $\mathcal{H}^C$-Hopf cyclic coefficients and abbreviated by $\mathcal{H}^C$-HCC,   if the cosimplicial and cyclic operators on $C^{\ot(n+1)}\square_\mathcal{H} M$ are well-defined and make it a cocyclic module.
 \end{definition}

 \begin{proposition}
   Let  $M$ be a right-left module-comodule over $\mathcal{H}$ and  $C$ be a right $\mathcal{H}$-comodule coalgebra. If $M$ is a $\mathcal{H}^C$-SAYD module then $M$ is an $\mathcal{H}^C$-HCC .
 \end{proposition}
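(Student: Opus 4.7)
The plan is to mirror the proof of the analogous statement for comodule algebras given just before. The faces $\delta_0,\ldots,\delta_{n-1}$ and degeneracies $\sigma_0,\ldots,\sigma_n$ are built from $\Delta_C$ and $\ve_C$, and since the $\Hc$-coaction on $C$ is a coalgebra map (the comodule-coalgebra axiom), each of these operators automatically sends $C^{\ot(n+1)}\square_{\Hc}M$ into the correct cotensor space. The $\Hc^C$-SAYD hypothesis is therefore needed only to verify (a) that the cyclic operator $\tau_n$ preserves the cotensor product, and (b) that $\tau_n^{n+1}=\id$. Once these are in place the extremal face $\delta_n$ is handled by the relation $\delta_n=\tau_{n+1}\circ\delta_0$, and the cocyclic identities follow formally from those of the cocyclic module of \cite{hr2} without coefficients.

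For (a), I would apply the diagonal $\Hc$-coaction to the $C^{\ot(n+1)}$-part of $\tau_n(c_0\ot\cdots\ot c_n\ot m)=c_1\ot\cdots\ot c_n\ot c_0\ns{0}\ot m\triangleleft c_0\ns{1}$ and use coassociativity of the $C$-coaction to rewrite $c_0\ns{0}\ns{0}\ot c_0\ns{0}\ns{1}\ot c_0\ns{1}$ as $c_0\ns{0}\ot c_0\ns{1}\ps{1}\ot c_0\ns{1}\ps{2}$. This produces
\begin{equation*}
c_1\ns{0}\ot\cdots\ot c_n\ns{0}\ot c_0\ns{0}\ot c_1\ns{1}\cdots c_n\ns{1}\,c_0\ns{1}\ps{1}\ot m\,c_0\ns{1}\ps{2}.
\end{equation*}
The cotensor condition holds exactly when this equals $c_1\ot\cdots\ot c_n\ot c_0\ns{0}\ot(mc_0\ns{1})\ns{-1}\ot(mc_0\ns{1})\ns{0}$, and the $\Hc^C$-AYD identity applied to $c=c_0$ expands the latter as
\begin{equation*}
c_1\ot\cdots\ot c_n\ot c_0\ns{0}\ot S(c_0\ns{1}\ps{3})\,m\ns{-1}\,c_0\ns{1}\ps{1}\ot m\ns{0}\,c_0\ns{1}\ps{2}.
\end{equation*}
To match the two expressions, I would apply the original cotensor identity $c_0\ns{0}\ot c_1\ns{0}\ot\cdots\ot c_n\ns{0}\ot c_0\ns{1}c_1\ns{1}\cdots c_n\ns{1}\ot m=c_0\ot\cdots\ot c_n\ot m\ns{-1}\ot m\ns{0}$ after one additional comultiplication of the $\Hc$-factor, so that both sides carry four Sweedler components of $c_0\ns{1}$; this substitutes $m\ns{-1}\ot m\ns{0}$ by $c_0\ns{1}\ps{4}c_1\ns{1}\cdots c_n\ns{1}\ot m$. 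The antipode identity $S(c_0\ns{1}\ps{3})c_0\ns{1}\ps{4}=\ve(c_0\ns{1}\ps{3})1_{\Hc}$ then collapses the four $c_0\ns{1}$-components back to two and produces exactly the earlier expression.

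For (b), iterating $\tau_n$ yields
\begin{equation*}
\tau_n^{n+1}(c_0\ot\cdots\ot c_n\ot m)=c_0\ns{0}\ot c_1\ns{0}\ot\cdots\ot c_n\ns{0}\ot m\triangleleft(c_0\ns{1}c_1\ns{1}\cdots c_n\ns{1}).
\end{equation*}
Applying the right $\Hc$-action $h\ot m\mapsto m\triangleleft h$ to the last two factors of the cotensor identity $c_0\ns{0}\ot\cdots\ot c_n\ns{0}\ot c_0\ns{1}\cdots c_n\ns{1}\ot m=c_0\ot\cdots\ot c_n\ot m\ns{-1}\ot m\ns{0}$ rewrites the displayed expression as $c_0\ot\cdots\ot c_n\ot m\ns{0}\triangleleft m\ns{-1}$, which by the $\Hc^C$-stability condition equals $c_0\ot\cdots\ot c_n\ot m$. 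The main obstacle is the Sweedler bookkeeping in step~(a): one must comultiply $c_0\ns{1}$ the right number of times so that the AYD expansion, the cotensor substitution and the antipode cancellation line up, but once the correct staging is chosen, the computation is formally routine.
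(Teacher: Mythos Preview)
Your proposal is correct and follows essentially the same approach as the paper's own proof: both reduce the argument to checking that $\tau_n$ lands in the cotensor product and that $\tau_n^{n+1}=\id$, and both carry this out via the $\Hc^C$-AYD identity, the cotensor relation for the original element, coassociativity of the $C$-coaction, and the antipode cancellation $S(c_0\ns{1}\ps{3})c_0\ns{1}\ps{4}=\ve(c_0\ns{1}\ps{3})1_\Hc$. The only cosmetic difference is that the paper runs the computation linearly from the $M$-coaction side to the diagonal $C$-coaction side, whereas you expand both sides and match them in the middle; your explicit handling of $\delta_n=\tau_{n+1}\circ\delta_0$ is a detail the paper leaves implicit.
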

\begin{proof}
       It is enough to show that the cyclic map $\tau$ is  well-defined. The following computation shows that $\tau_n(\widetilde{c}\ot m)\in ~ ^{\mathcal{H}}C^{n}(C, M)$.

\begin{align*}
  &c_1\ot \cdots \ot c_n\ot c_0\ns{0}\ot (m c_0\ns{1})\ns{-1}\ot (m c_0\ns{1})\ns{0}\\
  &=c_1\ot \cdots \ot c_n\ot c_0\ns{0}\ot  S(c_0\ns{1}\ps{3})m\ns{-1}c_0\ns{1}\ps{1}\ot m\ns{0}c_0\ns{1}\ps{2}\\
  &=c_1\ns{0}\ot \cdots \ot c_n\ns{0}\ot c_0\ns{0}\ns{0}\ot  S(c_0\ns{0}\ns{1}\ps{3})c_0\ns{1}\cdots c_n\ns{1}c_0\ns{0}\ns{1}\ps{1}\ot \\
  &~~~~~~~~~~~~~~~~~~~~~~~~~~~~~~~~~~~~~~~~~~~~~~~~~~~~~~~~~~~~~~~~~~~~~~~~~~\ot mc_0\ns{0}\ns{1}\ps{2}\\
  &=c_1\ns{0}\ot \cdots \ot c_n\ns{0}\ot c_0\ns{0}\ot  S(c_0\ns{1}\ps{3})c_0\ns{1}\ps{4}c_1\ns{1}\cdots c_n\ns{1}c_0\ns{1}\ps{1}\ot\\
  &~~~~~~~~~~~~~~~~~~~~~~~~~~~~~~~~~~~~~~~~~~~~~~~~~~~~~~~~~~~~~~~~~~~~~~~~~~\ot mc_0\ns{1}\ps{2}\\
  &=c_1\ns{0}\ot \cdots \ot c_n\ns{0}\ot c_0\ns{0}\ot c_1\ns{1}\cdots c_n\ns{1}c_0\ns{1}\ps{1}\ot mc_0\ns{1}\ps{2}\\
  &=c_1\ns{0}\ot \cdots \ot c_n\ns{0}\ot c_0\ns{0}\ns{0}\ot c_1\ns{1}\cdots c_n\ns{1}c_0\ns{0}\ns{1}\ot mc_0\ns{1}.\\
\end{align*}
We use the $\mathcal{H}^C$-AYD condition in the first equality, the relation $c_0\ot \cdots \ot c_n\ot m\in C^{\ot(n+1)}\square_\mathcal{H} M $ in the second equality, the coassociativity of the coaction for the element $a_0$ in the third  and last equalities. To prove the cyclicity, using the $\mathcal{H}^C$-stability condition we have,
\begin{align*}
  &\tau_n^{n+1}(c_0\ot \cdots \ot c_n\ot m)= c_0\ns{0}\ot \cdots \ot c_n\ns{0}\ot m c_0\ns{1}\cdots c_n\ns{1}\\
  &=c_0\ot \cdots \ot c_n\ot m\ns{0}m\ns{-1} =c_0\ot \cdots \ot c_n\ot m.
\end{align*}
\end{proof}
\begin{lemma}
Let  $C$ be a $\Hc$-comodule coalgebra. If the coaction of $\mathcal{H}$ on $C$ is cocommutative, \ie
\begin{equation}\label{cond10}
   \widetilde{c}\ns{0}\ot d\ns{0}\ot  \widetilde{c}\ns{1} d\ns{1}\ps{1}\ot d\ns{1}\ps{2}= \widetilde{c}\ns{0}\ot d\ns{0}\ot d\ns{1}\ps{2} \widetilde{c}\ns{1}\ot d\ns{1}\ps{1},
\end{equation}
for  all $c\in C$ and $h\in \mathcal{H}$, then any module $M$ over $\mathcal{H}$ with the trivial coaction  defines a $\mathcal{H}^C$-HCC.
\end{lemma}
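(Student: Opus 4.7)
The plan is to verify that the face, degeneracy, and cyclic operators of \eqref{comodule-coalgebra} descend to $C^{\ot(n+1)}\square_{\mathcal{H}}M$, from which the cocyclic identities (already established under the SAYD hypothesis) will carry over. With the coaction of $\mathcal{H}$ on $M$ trivial, the cotensor condition on $\widetilde{d}\ot m$ reduces to $\widetilde{d}\ns{0}\ot\widetilde{d}\ns{1}\ot m=\widetilde{d}\ot 1\ot m$, and the $\mathcal{H}^{C}$-stability axiom collapses to $m\ns{0}m\ns{-1}=m\cdot 1=m$, which is automatic. The face maps $\d_i$ for $i<n$ and all degeneracies $\sigma_i$ only apply $\Delta$ or $\varepsilon$ to a single tensor slot, both operations being $\mathcal{H}$-colinear by the comodule-coalgebra axioms, so they preserve the cotensor trivially. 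The substantive work lies with $\tau_n$ (and the analogous $\d_n$).

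My plan for $\tau_n$ is to show that it is a morphism of right $\mathcal{H}$-comodules on $C^{\ot(n+1)}\ot M$, equipped with the diagonal coaction on the $C$-factor and the trivial coaction on $M$; preservation of the cotensor product then follows automatically, since the cotensor is defined as the equalizer of the two coaction maps and a comodule morphism commutes with both. I will verify the intertwining
\[
  \tau_{n}(\widetilde{d}\ot m)\ns{0}\ot\tau_{n}(\widetilde{d}\ot m)\ns{1}=\tau_{n}(\widetilde{d}\ns{0}\ot m)\ot \widetilde{d}\ns{1}
\]
for $\widetilde{d}=c_{0}\ot\cdots\ot c_{n}$ by expanding both sides in Sweedler notation and applying coassociativity for the coaction of $c_{0}$. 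After this manipulation, the left-hand side takes the form
\[
 (c_{1}\ns{0}\ot\cdots\ot c_{n}\ns{0}\ot c_{0}\ns{0}\ot m\triangleleft c_{0}\ns{1}\ps{2})\ot c_{1}\ns{1}\cdots c_{n}\ns{1}c_{0}\ns{1}\ps{1},
\]
while the right-hand side takes the form
\[
 (c_{1}\ns{0}\ot\cdots\ot c_{n}\ns{0}\ot c_{0}\ns{0}\ot m\triangleleft c_{0}\ns{1}\ps{1})\ot c_{0}\ns{1}\ps{2}c_{1}\ns{1}\cdots c_{n}\ns{1}.
\]
These two expressions are exchanged precisely by the cocommutativity identity \eqref{cond10} with $\widetilde{c}=c_{1}\ot\cdots\ot c_{n}$ and $d=c_{0}$, once the right action $\mathcal{H}\ot M\to M$ is applied to the appropriate $d\ns{1}\ps{\bullet}$-slot.

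The main obstacle will be keeping the Sweedler bookkeeping transparent enough that the swap required by \eqref{cond10} is recognized between the $\mathcal{H}$-factor and the $M$-action rather than between two $\mathcal{H}$-factors; it is this identification that forces the combinatorial match. The face $\d_{n}$ is handled identically after substituting $c_{0}\ps{1}$ in place of $c_{0}$, using that $\Delta$ is an $\mathcal{H}$-comodule map. For the cyclicity relation, iterating $\tau_n$ yields $\tau_{n}^{\,n+1}(\widetilde{d}\ot m)=c_{0}\ns{0}\ot\cdots\ot c_{n}\ns{0}\ot m\triangleleft c_{0}\ns{1}\cdots c_{n}\ns{1}$, which collapses to $\widetilde{d}\ot m$ upon pushing the $\mathcal{H}$-action through the cotensor identity $\widetilde{d}\ns{0}\ot\widetilde{d}\ns{1}\ot m=\widetilde{d}\ot 1\ot m$; this replaces the stability step used in the SAYD case. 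The remaining cocyclic identities relating $\tau_n$ to $\d_i$ and $\sigma_i$ then follow from the formulas \eqref{comodule-coalgebra} by the same formal manipulation as in the preceding proposition.
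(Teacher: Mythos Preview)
Your proof is correct and follows essentially the same line as the paper's: the key step in both is to apply coassociativity of the $c_0$-coaction and then invoke the cocommutativity identity \eqref{cond10} with $\widetilde{c}=c_1\ot\cdots\ot c_n$ and $d=c_0$. The only difference is organizational: the paper verifies the cotensor condition on $\tau_n(\widetilde{d}\ot m)$ directly, using the cotensor condition on the original element in the last step, whereas you observe that \eqref{cond10} actually makes $\tau_n$ an $\mathcal{H}$-comodule map on all of $C^{\ot(n+1)}\ot M$ (not just on the cotensor) and then pass to the equalizer abstractly---a slightly cleaner packaging of the same computation.
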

\begin{proof}
  It is enough to show that the cyclic map is well-defined. The following computation proves $\tau(\widetilde{c}\ot m) \in C^{\ot(n+1)}\square_H M$.
  \begin{align*}
    &c_1\ns{0}\ot \cdots \ot c_n\ns{0}\ot c_0\ns{0}\ns{0}\ot c_1\ns{1}\cdots c_n\ns{1}c_0\ns{0}\ns{1}\ot mc_0\ns{1}\\
    &=c_1\ns{0}\ot \cdots \ot c_n\ns{0}\ot c_0\ns{0}\ot c_1\ns{1}\cdots c_n\ns{1}c_0\ns{1}\ps{1}\ot mc_0\ns{1}\ps{2}\\
    &=c_1\ns{0}\ot \cdots \ot c_n\ns{0}\ot c_0\ns{0}\ot c_0\ns{1}\ps{2} c_1\ns{1}\cdots c_n\ns{1}\ot mc_0\ns{1}\ps{1}\\
    &=c_1\ot \cdots \ot c_n\ot c_0\ns{0}\ot 1 \ot m c_0\ns{1}.
  \end{align*}
  We use \eqref{cond10} in the second equality and $c_0\ot \cdots \ot c_n \ot m \in C^{\ot (n+1)}\square_{\mathcal{H}} M$ in the last equality.
\end{proof}

\begin{example}\rm{

Let $\Hc= \Fc \acl \Uc$ be a bicrossed product Hopf algebra where $\Fc$ is commutative and cocommutative.  The following computation proves that the coaction of $\Hc$ on $\Uc$  defined in \eqref{com7} is  cocommutative.
\begin{align*}
& \widetilde{u}\ns{0}\ot d\ns{0}\ot  \widetilde{u}\ns{1} d\ns{1}\ps{1}\ot d\ns{1}\ps{2}\\
 &=u_1\ns{0}\ot \cdots \ot u_n\ns{0}\ot d\ns{0}\ot u_1\ns{1}\cdots u_n\ns{1} d\ns{1}\ps{1}\ot d\ns{1}\ps{2}\\
 &=u_1\ns{0}\ot \cdots \ot u_n\ns{0}\ot d\ns{0}\ot (u_1\ns{1}\lrbicross 1) \cdots (u_n\ns{1}\lrbicross 1)( d\ns{1}\ps{1}\lrbicross 1)   \ot \\
 &~~~~~~~~~~~~~~~~~~~~~~~~~~~~~~~~~~~~~~~~~~~~~~~~~~~~~~~~~\ot (d\ns{1}\ps{2}\lrbicross 1)\\
 &=u_1\ns{0}\ot \cdots \ot u_n\ns{0}\ot d\ns{0}\ot (u_1\ns{1}\cdots u_n\ns{1}d\ns{1}\ps{1}\lrbicross 1)  \ot (d\ns{1}\ps{2}\lrbicross 1)  \\
 &=u_1\ns{0}\ot \cdots \ot u_n\ns{0}\ot d\ns{0}\ot (d\ns{1}\ps{2}u_1\ns{1}\cdots u_n\ns{1}\lrbicross 1)  \ot (d\ns{1}\ps{1}\lrbicross 1)\\
 &=u_1\ns{0}\ot \cdots \ot u_n\ns{0}\ot d\ns{0}\ot ( d\ns{1}\ps{2}\lrbicross 1) (u_1\ns{1}\lrbicross 1) \cdots (u_n\ns{1}\lrbicross 1)  \ot \\
 &~~~~~~~~~~~~~~~~~~~~~~~~~~~~~~~~~~~~~~~~~~~~~~~~~~~~~~~~~\ot (d\ns{1}\ps{1}\lrbicross 1)\\
 &=\widetilde{u}\ns{0}\ot d\ns{0}\ot d\ns{1}\ps{2} \widetilde{u}\ns{1}\ot d\ns{1}\ps{1}.
\end{align*}
 In the fourth equality we use  the commutativity and cocommutativity of $\Fc$. As an example, co-opposite Hopf algebra of Schwarzian Hopf algebra $\Hc_{1s}^{cop}$ coacts cocommutatively on $\Uc$.
}
\end{example}

Let   $^C\mathcal{H}\text{-}\mathcal{SAYD}$  denotes the category of  $^C\mathcal{H}$-SAYD modules and  $^C\mathcal{H}\text{-}\mathcal{HCC}$ denotes the category of $^C\mathcal{H}$-Hopf cyclic cohomology coefficients. It is shown that  commutative coactions are a source of $^C\mathcal{H}\text{-}\mathcal{SAYD}$  and  cocommutative coactions as a source of  $^C\mathcal{H}\text{-}\mathcal{HCC}$. Based on our arguments  in this subsection, one has  the following proper inclusions of categories,

  \begin{equation*}
  {\mathcal{SAYD}_{\Hc} \quad \subsetneqq   \quad  ^C\mathcal{H}\text{-}\mathcal{SAYD} \quad    \subsetneqq    \quad ^C\mathcal{H}\text{-}\mathcal{HCC}.}
\end{equation*}

\section{Pairing between module algebras and comodule algebras}

 In this section we generalize the pairing between Hopf cyclic cohomology of module algebras and comodule algebras \cite{atabey2}, \cite{bm2}. Let $\mathcal{H}$ be a Hopf algebra, $A$ a left $\mathcal{H}$-module algebra, $B$ a left $\mathcal{H}$-comodule algebra and  $\mathcal{M}$  a right-left $_A\mathcal{H}$- and $^B\Hc$-SAYD module. We consider the crossed product algebra $A\rtimes B$ with the following multiplication,
\begin{align}
  (a\rtimes b)(a'\rtimes b')= a b\ns{-1} a'\rtimes b\ns{0} b'.
\end{align}
This is an unital algebra where its unit element is $1\rtimes 1$. Let $C^{\Hc}(A,M):=\text{Hom}^{\Hc}(A^{\ot (n+1)}, M)$ and $C^n_{\Hc}(A, M)= \text{Hom}_{\Hc}(M\ot A^{\ot(n+1)}, \mathbb{C})$ be the cocyclic modules defined in   \eqref{comodule algebra} and \cite{HaKhRaSo2} respectively.
We consider the following diagonal complex,
\begin{equation}
  C^{n,n}_{a-a}:= \text{Hom}_{\Hc}(M\ot A^{\ot(n+1)}, \mathbb{C}) \ot \text{Hom}^{\Hc}(B^{\ot (n+1)}, M)
\end{equation}
with the cocyclic structure $(\delta\times d , \sigma\times s ,\tau\times t)$.
We define the following map,
\begin{align*}
 &\Psi: C^{n,n}_{a,a}\longrightarrow \Hom( (A\rtimes B)^{\ot (n+1)}, \mathbb{C}) )\\
 &\Psi(\varphi \ot \psi)(a_0\rtimes b_0 \ot \cdots \ot a_n\rtimes b_n)=\\
 & \varphi( \psi(b_0\ns{0}\ot \cdots b_n\ns{0})\ot S^{-1}(b_0\ns{1}\cdots b_n\ns{-1})a_0\ot \cdots S^{-1}(b_n\ns{-n-1}a_n)).
\end{align*}
We have the following lemma.
\begin{lemma}
  The map $\Psi$ is a cocyclic  map between cocyclic modules  $C^{*,*}$ and $C^*(A\rtimes B)$.
\end{lemma}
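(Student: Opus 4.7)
The plan is to verify, one generator at a time, that $\Psi$ intertwines the cocyclic structure. The diagonal cocyclic operators on $C^{n,n}_{a-a}$ are $(\delta_i\times d_i,\; \sigma_i\times s_i,\; \tau\times t)$, while the target $\Hom((A\rtimes B)^{\ot(n+1)}, \mathbb{C})$ carries the standard Connes cocyclic structure of the unital algebra $A\rtimes B$. It therefore suffices to check that $\Psi$ commutes with each coface, each codegeneracy and with the cyclic generator; matching the cyclic operator will turn out to be the crux.

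\medskip

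For the inner cofaces $\delta_i$ with $0\le i<n$, I would substitute the crossed-product rule $(a_i\rtimes b_i)(a_{i+1}\rtimes b_{i+1})=a_i\, b_i\ns{-1}a_{i+1}\rtimes b_i\ns{0}b_{i+1}$ into the definition of $\Psi$. Using coassociativity of the left $\Hc$-coaction on $B$ together with the $\Hc$-linearity of $\varphi$ in its $A$-slots, the action of $b_i\ns{-1}$ on $a_{i+1}$ can be absorbed into the antipode string $S^{-1}(b_j\ns{-1}\cdots)$ already present, and what remains is exactly $\Psi$ applied to $\delta_i\varphi\ot d_i\psi$. The codegeneracies $\sigma_i$ are essentially immediate, because inserting $1_A\rtimes 1_B$ into an argument of $\Psi$ corresponds precisely to inserting $1_A$ into the $\varphi$-argument and $1_B$ into the $\psi$-argument. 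The extreme coface $\delta_n$ involves the same cyclic-type shift on the $0$-th and $n$-th entries as $\tau$, and is handled by the same computation.

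\medskip

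The core of the proof is the cyclic operator, where one has to prove
\[
\Psi(\tau\varphi\ot t\psi) \;=\; \tau^{A\rtimes B}\,\Psi(\varphi\ot\psi).
\]
Expanding the right-hand side produces the cyclic shift $a_n\rtimes b_n$ to the leftmost slot, together with the crossed-product antipode string $S^{-1}(b_n\ns{-1}b_0\ns{-1}\cdots b_{n-1}\ns{-1})a_0\ot\cdots$. Expanding the left-hand side, the operator $\tau$ on $\Hom_{\Hc}(M\ot A^{\ot(n+1)}, \mathbb{C})$ (from \cite{HaKhRaSo2}) produces a factor of $b_n\ns{-1}$ acting on $\mathcal{M}$ via the right module structure, while the operator $t$ on $\Hom^{\Hc}(B^{\ot(n+1)}, M)$ from \eqref{comodule algebra} produces another factor of $b_n\ns{-1}$ acting on $\mathcal{M}$ via the left comodule structure. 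The task is to merge these into a single contribution matching the right-hand side: split $\D(b_n\ns{-1})$ by coassociativity, transport one leg through the $A$-entries using the $_A\Hc$-AYD condition on $\varphi$, transport the other leg through the $B$-entries using the $^B\Hc$-AYD condition on $\psi$, and then collapse the extra $\Hc$-factors via anti-multiplicativity of $S$ and the antipode axiom $S(h\ps{1})h\ps{2}=\ve(h)$.

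\medskip

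The principal obstacle is the bookkeeping of nested antipodes $S^{-k}$ and iterated coactions $b_i\ns{-1}\cdots b_i\ns{-k}$ produced by the definition of $\Psi$: each $S^{-1}$ has to be split cleanly into pieces absorbable either through $\varphi$ (via $_A\Hc$-AYD and $\Hc$-linearity) or through $\psi$ (via $^B\Hc$-AYD and $\Hc$-colinearity). The symmetry between the two AYD conditions under $\Psi$ makes such a splitting natural, but carrying it out requires an explicit and somewhat lengthy manipulation, modelled on the analogous computation for module algebras and module coalgebras in \cite{rangipour}. Once that symmetric decomposition is in place, the Hopf algebra axioms close up both sides; the remaining verification of $(\tau^{A\rtimes B})^{n+1}=\mathrm{id}$ on the image, where the $_A\Hc$- and $^B\Hc$-stability conditions of $\mathcal{M}$ finally enter, follows by iterating the same bookkeeping $n+1$ times.
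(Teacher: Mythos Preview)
Your proposal is correct and follows precisely the route the paper indicates: the paper offers no detailed argument for this lemma, stating only that one proceeds ``similar to the argument in \cite{rangipour}'', and your sketch is exactly a spelling-out of that verification (inner cofaces via the crossed-product rule and coassociativity, codegeneracies trivially, and the cyclic operator via the two AYD conditions together with antipode bookkeeping). One small remark: your final sentence about checking $(\tau^{A\rtimes B})^{n+1}=\id$ on the image is superfluous, since the target is the ordinary cocyclic module of the algebra $A\rtimes B$ and is cyclic a priori; the $_A\Hc$- and $^B\Hc$-stability conditions enter only to guarantee that the \emph{source} diagonal $C^{*,*}_{a-a}$ is itself cocyclic.
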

This shows that  $\Psi$ induces a map on the level of cyclic cohomology. One use the $^B\Hc$-SAYD module property to
use the similar argument in \cite{rangipour} for proving the following proposition.
\begin{proposition}
Let $\mathcal{H}$ be a Hopf algebra, $A$ a left $\mathcal{H}$-module algebra, $B$ a left $\mathcal{H}$-comodule
algebra and  $M$  a right-left $_A\mathcal{H}$ and $^B\Hc$-SAYD module. The following map defines a cup product on Hopf cyclic cohomology.
  \begin{align}\nonumber
  &\sqcup: HC_{\mathcal{H}}^p(A, M)\ot HC_{\mathcal{H}}^q(B,M)\longrightarrow HC^{p+q}(A\rtimes B),\\
   &~~~~~~~~~~~~~~~~~~~~~~~~~~ \sqcup:= \Psi \circ AW,
  \end{align}
  where AW is Alexander-Whitney map.
\end{proposition}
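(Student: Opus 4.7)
The plan is to build the cup product in three stages, reducing everything to the cocyclic map $\Psi$ of the preceding lemma together with a standard diagonal construction.

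\textbf{Step 1: The Alexander--Whitney map.} First I would recall that for a bicocyclic module $C^{\bullet,\bullet}$ with commuting horizontal and vertical cyclic structures, the Alexander--Whitney shuffle construction yields a map
\[
\mathrm{AW}: \mathrm{Tot}\,C^{\bullet,\bullet} \longrightarrow \Delta(C^{\bullet,\bullet}),
\]
where $\Delta$ denotes the diagonal cocyclic module. Applied to the external product $C^p_{\Hc}(A,M) \otimes C^q(B,M)$ (where each factor carries its own cocyclic structure from \eqref{comodule algebra} and from \cite{HaKhRaSo2}), this produces a chain map into $C^{p+q,p+q}_{a\text{-}a}$ that descends to a well-defined map on the level of (Hochschild/cyclic) cohomology. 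This part is purely formal and needs no conditions on $M$ beyond what is already required for each of $HC^p_{\Hc}(A,M)$ and $HC^q_{\Hc}(B,M)$ to be defined; the former uses the $_A\Hc$-SAYD condition and the latter uses the $^B\Hc$-SAYD condition, both of which are in the hypothesis.

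\textbf{Step 2: Composing with $\Psi$.} Next I would invoke the preceding lemma: $\Psi$ is a morphism of cocyclic modules from $C^{n,n}_{a\text{-}a}$ to $C^n(A\rtimes B)$. Since $\mathrm{AW}$ is also a cocyclic map (or at least commutes with the Connes boundary $B$ and the Hochschild coboundary $b$ up to chain homotopy, which suffices for passing to cyclic cohomology), the composition $\Psi \circ \mathrm{AW}$ is a chain map between the respective total complexes, hence descends to a map
\[
\sqcup:\; HC^p_{\Hc}(A,M) \otimes HC^q_{\Hc}(B,M) \longrightarrow HC^{p+q}(A\rtimes B).
\]
Well-definedness on cohomology classes follows from the commutativity of $\Psi$ and $\mathrm{AW}$ with all the face, degeneracy and cyclic operators, as established in the lemma (for $\Psi$) and in the standard theory (for $\mathrm{AW}$).

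\textbf{Step 3: The key technical point.} The hard part, and the content that is genuinely new here, is verifying the claim that underlies the lemma, namely that $\Psi(\varphi\otimes\psi)$ is actually well-defined as a cochain on $A\rtimes B$ and compatible with $\tau$. The source of difficulty is the crossed product multiplication $(a\rtimes b)(a'\rtimes b') = a\,b\ns{-1}a'\rtimes b\ns{0}b'$: in computing $\tau\Psi(\varphi\otimes\psi)$ one has to move the coaction factor $b_n\ns{-1}$ past $\varphi$ and exchange it for an action, and it is here that the $^B\Hc$-AYD condition, in the form
\[
\bigl(\psi(b\ns{0}\otimes \widetilde{b'})b\ns{-1}\bigr)\ns{-1}\otimes\bigl(\psi(b\ns{0}\otimes \widetilde{b'})b\ns{-1}\bigr)\ns{0}
\]
unfolded via $S(b\ns{-1}\ps{3})\,\psi\ns{-1}\,b\ns{-1}\ps{1}\otimes \psi\ns{0}\,b\ns{-1}\ps{2}$, is needed so that the resulting expression matches $\Psi$ applied to the cyclic permutation on the $(A\rtimes B)$-side. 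The $^B\Hc$-stability of $M$ is then exactly what is needed to obtain $\tau^{n+1}=\mathrm{id}$ after iterating. I would carry out this check by mimicking the computation in \cite{rangipour}, but replacing each use of the ordinary AYD and stability axioms by their $^B\Hc$-counterparts. Once this lemma is established, Steps 1 and 2 deliver the proposition with no additional work.
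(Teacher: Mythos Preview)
Your proposal is correct and follows essentially the same approach as the paper: the paper's own argument is a one-line reference to \cite{rangipour}, saying to repeat the computation there with the $^B\Hc$-SAYD condition in place of the ordinary SAYD condition, and your Steps 1--3 simply unpack that reference in more detail. The only nuance worth noting is that the paper treats the technical verification (your Step~3) as already absorbed into the preceding lemma, so that the proposition itself is obtained by the purely formal composition $\Psi\circ AW$.
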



  \end{document}